\def \R{{\hbox{\vrule width 0.6pt height 6.8pt depth -.2pt\kern-0.2pt
R}}}
\def \P{{\hbox{\vrule width 0.6pt height 6.8pt depth -.2pt\kern-0.2pt
P}}}
\def \R {\mathbb R}
\def \N {\mathbb N}
\def \P {\mathbb P}
\def\MOD#1{{|\kern -.16em |\kern -.16em | #1 | \kern -.16em |\kern
 -.16em |}}
\def \epsilon {\varepsilon}
\newtheorem{theo}{\bf THEOREM}[section]
\newtheorem{lem}[theo]{\bf LEMMA}
\newtheorem{pro}[theo]{\bf PROPOSITION}
\newtheorem{defi}[theo]{\bf DEFINITION}
\newtheorem{rem}[theo]{\bf REMARK}
\newcommand{\eps}{\varepsilon}
\newenvironment{proof_prop3.5}[1][\bf{Proof of Proposition  \ref{prop_red} }]{\noindent{\it{#1}}}{\hfill$\square$\\}
\newenvironment{proof_lemm3.8}[1][\bf{Proof of Lemma \ref{lem_est} }]{\noindent{\it{#1}}}{\hfill$\square$\\ }
\numberwithin{equation}{section}
\title{ \textbf{\Large Refined blow-up asymptotics
    for a perturbed nonlinear heat equation with
    a gradient and a non-local  term} }
\author{\large Bouthaina Abdelhedi\\
   \textit{\small Universit\'e  de Sfax, Facult\'e  des Sciences de Sfax, }\\
 \textit{\small D\'epartement de  Mathematiques,  BP 1171, Sfax 3000, Tunisie.}\\
   \large Hatem Zaag\\
  \textit {\small Universit\'e Sorbonne Paris Nord},\\
   \textit{\small LAGA, CNRS (UMR 7539), F-93430, Villetaneuse, France}
 }
\begin{document}
\maketitle

\medskip

\begin{abstract} We consider in this paper a perturbation of the
  standard semilinear heat equation by a term involving the space
  derivative and a non-local term. In some earlier works \cite{AZ1,
    AZ2}, we constructed a  solution $u$  for that equation  such that
  $u$ and $\nabla u$ both blow up at the origin and only there. We
  also gave the final blow-up  profile. In this paper,
we refine our construction method in order to get a sharper estimate
on the gradient at blow-up.
\end{abstract}
~~\\
\textbf{ AMS 2010 Classification:}  35B20, 35B44, 35K55.\\
\textbf{Keywords:} Blow-up, nonlinear heat equation, gradient term, non-local term.\\
\section{Introduction}
We consider in this paper   the following nonlinear parabolic equation\begin{equation}\label{eq_u}
\left \{
\begin{array}{lcl}
 u_{t}&=&\Delta u+|u|^{p-1}u+\mu |\nabla u|\displaystyle \int_{B(0, |x|)}|u|^{q-1},\\
 u(0)&=&u_0\in W^{1, \infty}(\R^N),
\end{array}
\right.
\end{equation}

where $u=u(x,t)\in \R$, $x\in \R^N$ and the parameters $p,\; q$ and $\mu$ are such that \begin{equation}\label{hyp}
\displaystyle  p>3, \quad \frac{N}{2}(p-1)+1<q<\frac{N}{2}(p-1) +\frac{p+1}{2},\quad \mu\in \R.
\end{equation}

Equation $(\ref{eq_u})$ is wellposed in the weighted  functional  space $W^{1, \infty}_\beta(\R^N)$ defined as follows:
\begin{equation}
 W^{1, \infty}_\beta(\R^N)=\{g, \; (1+|y|^\beta) g\in L^\infty, \;(1+|y|^\beta)\nabla  g\in L^\infty\},
 \end{equation}
where
\begin{equation}\label{beta}
 0\le \beta <\frac{2}{p-1},\;\mbox{if}\;\mu=0\; \mbox{ and }\;\displaystyle \frac{N}{q-1}<\beta<\frac{2}{p-1},\;\mbox{if}\; \mu\neq 0,
\end{equation}
as one may see from Appendix C in \cite{AZ1}. From the standard
dichotomy, either the maximal solution is global in time, or it exists
up to some maximal time $T<+\infty$ with
\[
\|u(t)\|_{W^{1,\infty}_\beta}\to \infty \mbox{ as }t\to + \infty.
\]
In that case, we say that $u(x,t)$ blows up in finite time, and we
call $T$ the blow-up time of the solution.

\medskip

When $\mu=0$, equation \eqref{eq_u} 
becomes
the standard semilinear heat equation with power nonlinearity:
\begin{equation}\label{equ}
u_t = \Delta u +|u|^{p-1}u.
\end{equation}
The existence of blow-up solutions for equation \eqref{equ} has been extensively studied,
see Fujita \cite{fujita}, Ball \cite{ball}, Berger and Kohn
\cite{berger}, Herrero and Vel\'azquez \cite{herrero3}, Bricmont and Kupiainen
\cite{bricmont}, Merle and Zaag \cite{MZ97} and the references therein.\\
 In particular, the authors in \cite{bricmont} and \cite{MZ97} constructed a solution $u$ which approaches an explicit universal profile $f$ depending only on $p$ and independent from initial data  as follows:
\begin{equation}
\left\|(T-t)^{\frac{1}{p-1}}
  u(x,t)-f\left(\frac{x}{\sqrt{(T-t)|\log(T-t)|}}\right)\right\|_{L^\infty}\to
0,
\end{equation}
as $t\to T$, where $f$ is the profile defined by
\begin{equation}\label{profil}
\displaystyle f(z)=\left(p-1+\frac{(p-1)^2}{4p}|z|^2\right)^{-\frac{1}{p-1}}.
\end{equation}
The proof relies on 2 parts:\\
- A formal part, where one finds an approximate solution, which will
be considered as a profile for the exact solution to be constructed.\\
- A rigorous part, where one linearizes the equation around the
approximate solution (i.e. the profile), and shows that the linearized solution has a
solution which converges to $0$. This part relies itself on 2 steps:
the first, where we reduce the control of the solution (which is
infinite dimensional) to a finite dimensional problem. Then, the
finite dimensional problem is solved thanks to index theory.

\medskip

This method has proved to be efficient for different PDEs from
different types, and no list can be exhaustive (see del Pino, Musso
and Wei \cite{DMWapde20}, Nouaili and Zaag \cite{NZarma18}, Tayachi
and Zaag \cite{TZ}, Duong and Zaag \cite{DZ}, 
  Mahmoudi, Nouaili and Zaag \cite{MNZ},  Merle, Raphael,  Rodnianski and  Szeftel \cite{MR},  Collot, Ghoul, Nguyen and Masmoudi \cite{CGNM}, etc...).

\medskip
In \cite{AZ1} and \cite{AZ2}, we considered equation \eqref{eq_u} as a
challenge for the construction of blow-up solutions, since it
presents a double difficulty: the gradient term and the nonlocal
term, and we were successful in proving the following:
%
%

\medskip

- In \cite{AZ1}, we constructed a solution $u(x,t)$ for  equation $(\ref{eq_u})$  which  blows up  in finite time $T$ at $a=0$, and we proved that the solution approaches the profile $f$ \eqref{profil} in the sense that
for all $(x, t)\in \R^N\times [0, T)$:
   \begin{eqnarray}\label{b_u}
  \displaystyle \Big|u(x,t)\!-\!(T\!-\!t)^{\!-\!\frac{1}{p-1}}f(\frac{x}{\sqrt{(T\!-\!t)|\log(T\!-\!t)|}})\Big|\!\leq \!\frac{C}{1\!+\!(\frac{|x|^2}{T\!-t})^{\frac{\beta}{2}}}\frac{(T\!-\!t)^{\!-\!\frac{1}{p-1}}}{|\log(T\!-\!t)|^{\frac{1\!-\!\beta}{2}}}, \end{eqnarray}
   and
\begin{eqnarray}\label{nabla_u}
\displaystyle \! \Big|\!\nabla \! u(x,t)\!-\!\frac{(T\!-\!t)^{\!-\!\frac{1}{2}\!-\!\frac{1}{p\!-\!1}}}{\sqrt{|\log(T\!-\!t)|}}\nabla \! f\!(\!\frac{x}{\sqrt{(T\!\!-\!t)|\log(T\!\!-\!t)|}})\Big|\!\!\leq\! \!\!\frac{C}{1\!\!+\!\!(\frac{|x|^2}{T\!-t})^{\!\frac{\beta}{2}}}\frac{(T\!-\!t)^{\!-\!\frac{1}{2}\!-\!\frac{1}{p-1}}}{|\log(\!T-\!t)|^{\frac{1\!-\!\beta}{2}}},
\end{eqnarray}
 (note that $0\le \beta<1$ from \eqref{beta} and \eqref{hyp}), and that $f$ is called  the ``intermediate'' profile).
 
 \medskip

- In \cite{AZ2}, adapting the technique developed for equation \eqref{equ} by Giga and Kohn \cite{giga1}, we prove  that neither $u$ nor $\nabla u$ blow up outside
  the origin.
Since $u$ blows up at the origin by \eqref{b_u}, this
  implies the single
point blow-up property for $u$. Using the mean value theorem, this
yields that $\nabla u$ blows up at the origin, which
 proves the single point blow-up property
for $\nabla u$ too. 
We also
prove the existence of a blow-up final profile $u^\ast$ such that $u(x,t)\to u^\ast(x)$ as $t\to T$ in $C^1$
of every compact of $\R^N\backslash \{0\}$. Next, we find an
equivalent of $u^\ast$ and an upper bound on $\nabla u^*$ near the
blow-up point:
\begin{equation}\label{profi_u}
u^\ast (x)\sim \Big [\frac{8p|\log|x||}{(p-1)^2|x|^2}\Big]^{\frac{1}{p-1}} ,\;\mbox{as}\; x\to 0,
\end{equation}
and for $|x|$ small,
\begin{equation}\label{profi_nabla}
\displaystyle |\nabla u^\ast(x)|\leq C |x|^{-\frac{p+1}{p-1}} \big|\log|x|\big|^{ \frac{p+3}{4(p-1)}}.
\end{equation}

\medskip

Our goal in this paper is to give a refined asymptotic description of
the blow-up solution. In fact, we prove more refined versions of \eqref{b_u}, \eqref{nabla_u} and \eqref{profi_nabla}.\\
This will be
  done through the introduction of a sharper shrinking set. More precisely,  we prove the following theorem:
\begin{theo}[A sharper description of blow-up]\label{th1}
Let $\mu\in \R$, $p>3,$  and $q\in \R$ such that $\displaystyle
\frac{N}{2}(p-1)+1<q<\frac{N}{2}(p-1) +\frac{p+1}{2}$.\\
Consider an arbitrary   $\beta$  such that
\begin{equation}
 0\leq \beta <\frac{2}{p-1},\;\mbox{if}\;\mu=0\; \mbox{ and }\;\displaystyle \frac{N}{q-1}<\beta<\frac{2}{p-1},\;\mbox{if}\; \mu\neq 0.
\end{equation}
Consider also some arbitrary  $\epsilon_1\in (0, \frac12]$ and $\alpha \in (0,  \frac12)$.\\
Then, there exists $T>0$ such that equation $(\ref{eq_u})$ has a solution $u(x,t)$ such that $u$ and $\nabla u $ simultaneously  blow up at time  $T$ at the point $a=0$. Moreover,
\begin{enumerate}
\item For all $t\in [0, T)$, for all $x\in \R^N$,
\begin{equation}\label{N_u}
\displaystyle |u(x,t)-(T-t)^{-\frac{1}{p-1}}f(\frac{x}{\sqrt{(T-t)|\log(T-t)|}})|\leq \frac{C}{1+(\frac{|x|^2}{T-t})^{\frac{\beta}{2}}}\frac{(T-t)^{-\frac{1}{p-1}}}{|\log(T-t)|^{1-\frac{\beta}{2}-\eps_1}},
\end{equation}
and
\begin{equation}\label{N_nabla}
\displaystyle\! \!|\nabla u(x,t)-\frac{(T\!-\!t)^{\!-\frac{1}{2}\!-\!\frac{1}{p\!-\!1}}}{\sqrt{|\log(T-t)|}}\nabla f(\frac{x}{\sqrt{(T\!-\!t)|\log(T\!-\!t)|}})|\!\leq \!\!\frac{C}{1\!+\!(\frac{|x|^2}{T-t})^{\frac{\beta}{2}}}\frac{(T\!-\!t)^{-\frac{1}{2}-\frac{1}{p-1}}}{|\log(T\!-\!t)|^{1\!-\!\frac{\beta}{2}\!-\!\eps_1}},
\end{equation}
where $f$ is defined in \eqref{profil}.

\item Blow-up occurs only at the origin.
\item  For all $x\neq 0$, $u(x,t)\to u^\ast (x)$ as $t\to T$ in $C^1$  of every compact  of $\R^N\setminus \{0\}$, with
\begin{equation}\label{prof_u}
u^\ast (x)\sim \Big [\frac{8p|\log|x||}{(p-1)^2|x|^2}\Big]^{\frac{1}{p-1}} ,\;\mbox{as}\; x\to 0,
\end{equation}
and for $|x|$ small,
\begin{equation}\label{prof_nabla}
\displaystyle |\nabla u^\ast(x)|\leq C |x|^{-\frac{p+1}{p-1}} \big|\log|x|\big|^{ \frac{p+1}{2(p-1)} -\alpha}.
\end{equation}

\end{enumerate}
\end{theo}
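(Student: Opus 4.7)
The plan is to follow the general construction strategy of \cite{AZ1}, but with a refined shrinking set that captures the improved logarithmic decay. The starting point is the similarity variables change, namely writing
\begin{equation*}
w(y,s)=(T-t)^{\frac{1}{p-1}}u(x,t),\qquad y=\frac{x}{\sqrt{T-t}},\qquad s=-\log(T-t),
\end{equation*}
which transforms \eqref{eq_u} into a parabolic equation on $\mathbb R^N\times[-\log T,\infty)$ whose stationary profile is (up to a logarithmic correction) the function $f$ of \eqref{profil}. Setting $q=w-\varphi$, where $\varphi(y,s)=f(y/\sqrt s)+\frac{a}{s}$ is the approximate profile used in \cite{AZ1}, the equation for $q$ reads $\partial_s q=\mathcal Lq+V(y,s)q+B(q)+R(y,s)+G(w,\nabla w)$, with $\mathcal L=\Delta-\tfrac12 y\cdot\nabla+1$ a self-adjoint operator on the Gaussian space whose spectrum is $\{1-\tfrac{k}{2}\}_{k\ge 0}$, $V$ a small potential, $B$ a quadratic remainder, $R$ the profile residual, and $G$ the contribution of the gradient and non-local perturbations.

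The core of the proof is the definition of a shrinking set $V_A(s)$, sharper than the one of \cite{AZ1}, where each projection of $q$ on the eigenspaces of $\mathcal L$ and each norm of $q$ in the interior/exterior regions is bounded by $A/s^{1-\beta/2-\epsilon_1}$ (times suitable $s$-dependent factors for the unstable modes $k=0,1$ and the null mode $k=2$) instead of the weaker $A/s^{(1-\beta)/2}$. The analogous sharper bounds are imposed on $\nabla q$ so as to yield \eqref{N_nabla}. The consistency of such bounds requires checking that the residual $R$ and the perturbation $G$ produce contributions of size $o(s^{-1+\beta/2+\epsilon_1})$ on the eigenmodes, which is indeed what \eqref{hyp} and the choice $\beta>N/(q-1)$ (when $\mu\ne 0$) enforce through the smallness of the non-local weight in the Gaussian norm. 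I would then linearize the semigroup $e^{\tau\mathcal L}$ on $V_A$ and show that, modulo the projections on the two unstable directions (corresponding to the eigenvalues $1$ and $\tfrac12$), every trajectory issued from initial data in a suitable $(N+1)$-parameter family stays in $V_A(s)$ for all $s\ge s_0$.

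To close the finite-dimensional reduction, I would prove that on the boundary $\partial V_A(s)$ only the two unstable projections can saturate, and that their flow is outgoing; a standard topological degree argument (as in \cite{bricmont, MZ97} and the implementation of \cite{AZ1}) then provides initial parameters leading to a solution $q\in V_A(s)$ for all $s\ge s_0$, yielding \eqref{N_u} and \eqref{N_nabla} after going back to the original variables. Item~2 (single-point blow-up) follows directly from \cite{AZ2}, since the sharper estimates in item~1 imply in particular the estimates \eqref{b_u}--\eqref{nabla_u} used there. For item~3, the convergence $u(\cdot,t)\to u^*$ in $C^1_{\rm loc}(\mathbb R^N\setminus\{0\})$ and \eqref{prof_u} are also established in \cite{AZ2} and need no change. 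The new point is \eqref{prof_nabla}: one estimates $\nabla u^*(x)$ by matching the interior region $|x|\le K\sqrt{(T-t)|\log(T-t)|}$, where \eqref{N_nabla} gives a refined control on $\nabla u$, with the exterior region, choosing the matching time $t(x)$ so that $T-t(x)\sim |x|^2/|\log|x||$. The improved exponent $\tfrac{p+1}{2(p-1)}-\alpha$ in \eqref{prof_nabla} (compared with $\tfrac{p+3}{4(p-1)}$ in \eqref{profi_nabla}) is exactly the gain one computes from replacing the old factor $|\log(T-t)|^{-(1-\beta)/2}$ in \eqref{nabla_u} by the new factor $|\log(T-t)|^{-(1-\beta/2-\epsilon_1)}$ of \eqref{N_nabla}, after optimizing in $\beta$ and $\epsilon_1$ within the ranges allowed by \eqref{hyp} and \eqref{beta}.

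The main obstacle I foresee is controlling the sharper shrinking set on the two nonlocal/gradient perturbations simultaneously: the non-local integral couples the Gaussian interior with the polynomial exterior region, so one has to verify that the improved bounds propagate through the nonlinearity without a logarithmic loss. This is precisely the point where the stronger assumption $p>3$ and the refined choice of $\beta$ in \eqref{beta} intervene, and where the delicate estimate of $G$ in the $L^\infty_\beta$ exterior norm (with the new factor $|\log(T-t)|^{-(1-\beta/2-\epsilon_1)}$) must be carried out carefully, following and sharpening the estimates of Sections~3 and~4 of \cite{AZ1}.
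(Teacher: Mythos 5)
Your overall architecture (similarity variables, linearization around an approximate profile, a sharper shrinking set, reduction to the unstable modes, topological degree, then items 2 and 3 via the machinery of \cite{AZ2}) is the same as the paper's. But you have missed the one technical point on which the whole improvement hinges. You linearize around $\varphi=f(y/\sqrt s)+a/s$; the paper explicitly cannot do this, because the flat correction $a/s$ is not in $W^{1,\infty}_\beta$ (its weighted norm $\|(1+|y|^\beta)\,a/s\|_{L^\infty}$ is infinite), so the profile must be $\varphi=f(y/\sqrt s)+\frac{\kappa N}{2ps}\chi_0(y/g_\eps(s))$ with a cut-off at scale $g_\eps(s)=s^{1/2+\eps}$. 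The residual of this profile satisfies $\|(1+|y|^\beta)R_e\|_{L^\infty}\le Cs^{-(1-\beta/2-\eps\beta)}$, so for $R$ to fit in the new shrinking set (whose exterior weighted bound is $A^2s^{-(1-\beta/2-\eps_1)}$) one must re-tune the cut-off exponent so that $\eps\beta\le\eps_1$ --- this is condition \eqref{choice}, and it is the actual reason the construction closes for arbitrarily small $\eps_1$. Your proposal instead attributes the consistency of the residual bounds to \eqref{hyp} and to $\beta>N/(q-1)$, which is not correct: those hypotheses govern well-posedness and the non-local term, not the profile residual. Without making $\eps$ depend on $\eps_1$, the analogue of Lemma \ref{lem_est} fails once $\eps_1$ is small.

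Two smaller divergences. First, the paper does not put gradient bounds into the shrinking set: only the bounds on $v_-$ and $v_e$ are sharpened in Definition \ref{def_shrinking} (those on $v_0,v_1,v_2$ are unchanged from \cite{AZ1}), and the estimate $\|(1+|y|^\beta)\nabla v\|_{L^\infty}\le CA^2s^{-(1-\beta/2-\eps_1)}$ is recovered a posteriori by a parabolic regularity argument (Proposition \ref{prop-reg}). If you instead build gradient bounds into the set, your exit analysis must show they can never saturate, which is that same regularity estimate in disguise; you should say how you would prove it. Second, for \eqref{prof_nabla} your matching heuristic does produce the right exponent (indeed $\frac{p+1}{2(p-1)}-\alpha\to\frac1{p-1}$ as $\alpha\to\frac12$, consistent with the formal matching at $T-t(x)\sim|x|^2/|\log|x||$), but the proof is not a pure matching: one reruns the Giga--Kohn-type no-blow-up argument of \cite{AZ2} with the cut-off rescaled to $r|\log(T-t_0)|^{\alpha}$ and the smallness threshold replaced by $c|\log(T-t_0)|^{-(1-\eps_1)}$; the restriction $\alpha<\frac12$ comes from there, not from ``optimizing in $\beta$ and $\eps_1$''.
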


\begin{rem}
  \begin{enumerate}
  \item 
      With respect to our earlier work \cite{AZ1}
        and \cite{AZ2}, the improvement lays in \eqref{N_u}, 
        \eqref{N_nabla} and \textcolor{blue}{ \eqref{prof_nabla}}, since we could obtain the result in those
        papers only with $\epsilon_1=\frac 12$ and $\alpha=\frac{1}{4}$, whereas we are able
      now to get it for smaller values of $\epsilon_1$ and larger values of $\alpha$, namely for $\epsilon_1$ arbitrarily close to $0$ and $\alpha$ arbitrarily close to $\frac 12$. 
  \item
    As we have already noted in our earlier paper
      \cite{AZ1}, already when $\mu=0$, our error estimate in \eqref{N_u} is
      better than in \cite{MZ97}, thanks to the term
      $ 1+(\frac{|x|^2}{T-t})^{\frac{\beta}{2}}$ in the denominator. In fact, our estimate is even better than \eqref{b_u} proved in \cite{AZ1}, since we can take $\epsilon_1$ arbitrarily small and not just equal to $\frac 12$ as in \eqref{b_u}.

\item  As we have already noted in our previous paper
    \cite{AZ2}, the blow-up of the gradient at the origin is a
    consequence of the single-point blow-up of the solution and the
    mean value theorem. In this paper, we do better, by exhibiting
    some $\xi(t)\to 0$ as $t\to T$, so that $|\nabla(\xi(t),t)| \to
    \infty$ (see below in page \pageref{rk}). The improvement in \eqref{N_nabla} is crucial in
    obtaining that result, keeping in mind that the version with
    $\epsilon_1=\frac 12$ given in \cite{AZ1} (see \eqref{nabla_u}
    above) was not enough to derive such a sequence $\xi(t)$.
    \item We wonder whether one may be able to get an equivalent for the gradient final blow-up profile. Such a question seems to be hard, since, up to our knowledge, no such equivalent was proved in the literature, except in the recent work on the standard semilinear heat equation \eqref{equ} by Duong, Ghoul and Zaag \cite{DGZ21}.
    
\end{enumerate}
\end{rem}

The paper is organized as follows. In Section $2$, we give a formulation of the problem. In Section $3$, we prove  the existence of a solution of equation $(\ref{eq_v})$. Finally, in Section $4$,  we prove Theorem \ref{th1}.
\section{Formulation of the problem}
\textcolor{black}{In this section, we consider $T>0$ and $\eps_1\in (0, \frac{1}{2}]$, then 
we recall} the following similarity variables transformation  introduced by Giga and Kohn \cite{giga1}, \cite{giga2}, \cite{giga3}:
\begin{equation}\label{var_sim}
\displaystyle  y=\frac{x}{\sqrt{T-t}}, \;\;s=-\log(T-t)\;\;
\mbox{and}\;\;
 w(y,s)=\displaystyle (T-t)^{\frac{1}{p-1}}u(x,t),\end{equation}
where $T$ is the time where we want to make the solution blow up.\\
If $u(x,t)$ satisfies $(\ref{eq_u})$ for all $(x,t)\in \R^N\times [0, T)$, then $w(y,s)$ satisfies the following equation  for all $(y,s) \in \R^N\times[-\log T, +\infty)$:
\begin{equation}\label{eq_w}
\partial_s w=\Delta w-\frac12 y \cdot\nabla w-\frac{1}{p-1}w+|w|^{p-1}w+\mu e^{-\gamma s} |\nabla w|\int_{B(0, |y|)}|w|^{q-1},
\end{equation}
where $\gamma=\displaystyle \frac{p-q}{p-1}+\frac{N-1}{2}$.\\
Note from \eqref{hyp} that $\gamma>0$, which explains   the little effect of the perturbation term  for  large time $s$. 
This allows us to adopt a perturbative approach with respect to earlier constructions given for the standard semilinear heat equation \eqref{equ}.

\medskip

With the transformation \eqref{var_sim}, we reduce the construction of  a solution $u(x,t)$ for equation \eqref{eq_u} that blows up at $T<\infty$ 
to the construction of a global solution $w(y, s)$ for equation \eqref{eq_w}
such that 
\[
\forall s\ge s_0,\;\;
\epsilon_0 \le \|w(s)\|_{L^\infty}\le \frac 1{\epsilon_0},
\]
for some $\epsilon_0>0$. In fact, we will require more information on the solution, in the sense that 
 we would like to find $s_0>0$ and  initial data  $w_0$ such that the solution $w$ of equation (\ref{eq_w}) with $w(s_0)=w_0$ satisfies
$$\|w(y,s)-f(\frac{y}{\sqrt{s}})\|_{W^{1, \infty}_\beta}\to_{s\to \infty}0, $$
where $f$ is the profile defined by
\begin{equation}\label{profile}
\displaystyle f(z)=(p-1+\frac{(p-1)^2}{4p}|z|^2)^{-\frac{1}{p-1}}.
\end{equation}

As we have already pointed-out in \cite{AZ1}, we don't linearize \eqref{eq_w} around $f(\frac{y}{\sqrt{s}})+\frac{\kappa N}{2ps}$ as in the case of equation \eqref{equ} treated in \cite{MZ97}, since this function is not in the space $W^{1, \infty}_\beta$.  In fact,
we linearize equation $(\ref{eq_w})$ around a new profile
\begin{equation}\label{n_profile}
\displaystyle\varphi(y,s)=f(\frac{y}{\sqrt{s}})+\frac{\kappa N}{2ps}\chi_0(
\frac{y}{g_\eps(s)}),
\end{equation} where $\kappa=(p-1)^{-\frac{1}{p-1}}$ is a stationary solution for equation $(\ref{eq_w})$, $\chi_0\in C_0^\infty$ with ${\rm supp }(\chi_0)\subset B(0, 2)$,  $\chi_0\equiv 1$ on $B(0, 1)$ and $g_\eps(s) =s^{\frac{1}{2}+\eps}$, where $\eps$ is a fixed constant satisfying  \textcolor{black}{
\begin{equation}\label{choice}
0<\eps<\min(1, \eps_1 \beta^{-1}),
\end{equation}
(we could have taken $\eps =\frac{1}{2}\min(1,\eps_1 \beta^{-1} )$)}.\\
%
%
Introducing
\begin{equation}\label{v} v(y,s)=w(y,s)-\varphi(y,s),
\end{equation}
the problem is reduced to constructing a function $v$ such that 
$$ \lim_{s\to +\infty} \|v(s)\|_{W^{1, \infty}_\beta}=0.$$
If $w$ satisfies  equation $(\ref{eq_w})$, then $v$ satisfies the following equation
\begin{equation}\label{eq_v}
\partial_s v=(\mathcal{L}+V)v+B(v)+R(y,s)+\mathcal N(y, s),
\end{equation}
where
\begin{itemize}
\item the linear term  $(\mathcal{L}+V)v$  is given by 
\begin{equation}\label{linear_term}
\mathcal{L}(v)=\Delta v -\frac12 y\cdot\nabla v+v \; \mbox{ with }\;V(y, s)=p\varphi^{p-1}-\frac{p}{p-1},
\end{equation}

\item the nonlinear term is
\begin{equation}\label{nonlinear_term}
B(v)=|v+\varphi|^{p-1}(v+\varphi)-\varphi^{p}-p\varphi^{p-1}v,
\end{equation}
\item the remainder term  involving $\varphi$ is
\begin{equation}\label{rest_term}
R(y,s)=\Delta \varphi -\frac12 y\cdot\nabla \varphi -\frac{1}{p-1}\varphi+\varphi^p-\partial_s \varphi,
\end{equation}
\item and the ``new'' term is
\begin{equation}\label{N}
\displaystyle \mathcal{N}(y,s)=\mu e^{-\gamma s}|\nabla v+\nabla \varphi|\int_{B(0, |y|)} |v+\varphi|^{q-1}.
\end{equation}
\end{itemize}
Let us introduce the following integral form of equation $(\ref{eq_v})$, for each $s\geq \sigma\geq s_0$:
 \begin{equation}\label{eq_int_v}
v(s)=K(s, \sigma) v(\sigma)+\int_\sigma^s K(s,t)\big(B(v(t))+R(t)+\mathcal{N}(t)\big)dt,
\end{equation}
where $K$ is   the fundamental solution of the operator $\mathcal{L}+V$.\\
Since the dynamics of \eqref{eq_v} are influenced by the  linear operator  $\mathcal{L}+V$, we first need to  recall some  of its properties (for more details, see \cite{bricmont}).\\ The operator $\mathcal{L}$ is self-adjoint in $D(\mathcal{L})\subset L^2_\rho(\R^N)$, where $$L^2_\rho(\R^N)=\{v\in L^2_{loc}(\R^N);\quad \int_{\R^N}(v(y))^2 \rho(y) dy<\infty\}, \quad \rho(y)=\displaystyle \frac{e^{-\frac{|y|^2}{4}}}{(4\pi)^{\frac{N}{2}}}.$$
The spectrum of $\mathcal{L}$ consists only in eigenvalues  given by
$$ {\rm spec}(\mathcal{L})=\{1-\frac{m}{2}; \quad m\in \N\},$$
and its  eigenfunctions are rescaled Hermite polynomials.\\
If $N=1$, all the eigenvalues are simple, and the eigenfunction corresponding to $1-\frac{m}{2}$ is
\begin{equation}\label{vect_prop}
\displaystyle h_m(y)=\sum_{k=0}^{[\frac{m}{2}]}\frac{m!}{k! (m-2k)!}(-1)^ky^{m-2k}.
\end{equation}
In particular, $h_0(y)=1$, $h_1(y)=y$ and $h_2(y)=y^2-2$ are the eigenfunctions corresponding to nonnegative eigenvalues $\lambda \in\{1,\frac 12,0\}$. We remark that $h_m$ satisfies:
$$\displaystyle \int_{\R} h_n h_m \rho dx=2^nn! \delta_{n,m}.$$
We also introduce $\displaystyle k_m=\frac{h_m}{\|h_m\|_{L^2_\rho(\R)}^2}$.\\
If $N\geq 2$, the eigenspace corresponding to $1-\frac{m}{2}$ is given by
$$E_m=\{h_{m_1}(y_1)\cdots h_{m_N}(y_N); \quad \quad m_1+\cdots m_N=m\}.$$
In particular, these are the eigenspaces corresponding to nonnegative eigenvalues:
$$E_0=\{1\}, \; E_1=\{y_i;\quad  \; i=1,\cdots, N\}\; \mbox{ and} \;E_2=\{h_2(y_i),\;  y_iy_j;\quad   i,j=1, \cdots , N,\;  i\neq j\}.$$
The potential $V(y,s)$ has two fundamental properties:
\begin{itemize}
\item $V(., s)\to 0$ in $L^2_\rho$ as $s\to +\infty$. In particular, on bounded sets or in the ``blow-up area'' $\{|y|\leq K_0\sqrt{s}\}$, where $K_0>0$ will be fixed large enough, the effect of $V$ is considered as a perturbation of the effect of $\mathcal{L}$, except maybe on the null eigenspace of $\mathcal{L}$.
\item Outside the  ``blow-up  area'', we have the following property: for all $\eta>0$ the exist $C_\eta>0$ and $s_\eta$ such that
$$\displaystyle \sup_{s\geq s_\eta , |y|\geq C_\eps \sqrt{s}}|V(y,s)-(-\frac{p}{p-1})|\leq \eta.$$
\end{itemize}

This means that $\mathcal{L}+V$ behaves like $\mathcal{L}-\frac{p}{p-1}$ in the region $\{|y|\geq K_0\sqrt{s}\}$ for $K_0$ large enough. Since $-\frac{p}{p-1}<-1$ and $1$ is the largest eigenvalue of $\mathcal{L}$, we can consider the operator
$\mathcal{L}+V$ 
outside the blow-up area as an operator with a purely negative spectrum, which  greatly simplifies the analysis.\\
Bearing in mind 
that the behavior of $V$ is not the same inside and outside the  ``blow-up''  area, we decompose $v$ as follows. Let us first introduce the following cut-off function:
\begin{equation}\label{chi}
\chi(y, s) =\chi_0(\displaystyle \frac{|y|}{K_0\sqrt{s}}),
\end{equation}  where $K_0>0$ 
will 
be fixed large enough so that various technical estimates hold, and the cut-off function $\chi_0$ was already introduced after \eqref{n_profile}.\\
We write
\begin{equation}\label{decomp_v}
v(y, s)= v_b(y,s)+v_e(y,s),
\end{equation}
with
\begin{equation}\label{decomp_v_b}
v_b(y,s)=v(y,s)\chi(y,s), \quad v_e(y,s)=v(y,s)(1-\chi(y,s)).
\end{equation}
We note that ${\rm supp }\; v_b(s)\subset B(0, 2K_0\sqrt{s})$ and ${\rm supp }\; v_e(s)\subset \R^N\backslash  B(0, K_0\sqrt{s})$.\\
In order to control $v_b$, we decompose it according  to the sign of the eigenvalues of $\mathcal{L}$ as follows (for simplicity, we give the decomposition only for $N=1$, bearing in mind that the situation for $N\ge 2$ is the same, except for some more complicated notations that can be found in Nguyen and Zaag \cite{NZ0}, where the formalism in higher dimensions is extensively given):
\begin{equation}\label{decomp}
\displaystyle v(y, s)= v_b(y,s)+v_e(y,s) =\sum_{m=0}^{2}v_m(s)h_m(y)+v_{-}(y, s)+v_e(y,s),
\end{equation}
where for $0\leq m\leq 2$, $v_m=P_m(v_b)$ and $v_{-}(s)=P_{-}(v_b)$, with $P_m$ being the $L^2_\rho$ projector on $h_m$, the eigenfunction corresponding to $\lambda=1-\frac{m}{2}$, and $P_{-}$ the projector on $\{h_i\;| \; i\geq 3\}$, the nonpositive subspace of the operator $\mathcal{L}$.

\section{Existence}
This  section  is devoted to the proof of the existence of a solution $v$ of $(\ref{eq_v})$ such that 
\begin{equation}\label{goal}
\displaystyle \lim_{s\to +\infty}\|v(s)\|_{W^{1, \infty}_\beta}=0.
\end{equation}
We proceed exactly as in our previous paper \cite{AZ1}, except for the definition of the shrinking set below in Definition \ref{def_shrinking}, where we will introduce a smaller set, leading to a sharper estimate on the norm in \eqref{goal}, at the expense of more involved estimates.
For simplicity in the notations, we give the proof only when  $N=1$.

\medskip

As in our previous paper \cite{AZ1},
our argument is a non trivial adaptation of the method performed of equation \eqref{equ} by  Bricmont and Kupiainen
 \cite{bricmont}, Merle and Zaag \cite{MZ97}, Nguyen and Zaag \cite{NZ2} and Tayachi and Zaag  \cite{TZ}. 
 Accordingly, as in \cite{AZ1}, we proceed in 4 steps:
\begin{itemize}
\item In the first step, we define a  new shrinking set $\mathcal{V}_{A, \eps_1}(s)$ and translate our goal in \eqref{goal} to the construction of a solution belonging to that set.

\item In the second step, using the spectral properties, we reduce
our goal from the control of $v(s)$ (an infinite dimensional variable) in $\mathcal{V}_{A, \eps_1}(s)$ to
the control of its two first components $(v_0(s), v_1(s))$  in $ [-\frac{A}{s^2},\frac{A}{s^2}]^2$ (a two-dimensional variable).
\item In the third step,  we prove a parabolic regularity estimate, in order to obtain  the convergence of the gradient term.
\item In the last step, we solve the two-dimensional problem using index theory
and  we prove the existence of a solution  $v$ of \eqref{eq_v} belonging to the new  shrinking set $\mathcal{V}_{A, \eps_1}(s)$.
\end{itemize} 

In the following, we fix $\beta$ satisfying \eqref{beta} and  we denote by $C$ a generic positive constant, depending only on $p$, $\mu$, $\beta$ and $K_0$. Note that $C$  does not depend on $A$,  $\eps_1$  and $s_0$, the constants that will appear below.
\subsection{A New  shrinking set $\mathcal{V}_{\beta, K_0, A,  \eps_1}(s)$ }
In this subsection, 
 we introduce  the  new shrinking set,
 which is the main novelty with respect to our previous work \cite{AZ1}:
\begin{defi}[A set shriking to zero] \label{def_shrinking} For all  $K_0\ge 1$, $A\geq 1$, $0<\eps_1\le \frac{1}{2}$ and  $s\geq 1$, we define $\mathcal{V}_{\beta, K_0, A,  \eps_1}(s)$ (or $\mathcal{V}_{A, \eps_1}(s)$ for simplicity) as the set of all functions  $g$ such that $(1+|y|^{\beta})g\in L^\infty(\R^N)$  and
\begin{eqnarray}\label{shri_1}
|g_k|\leq \displaystyle \frac{A}{s^2},\; k=0, 1, \;\;  |g_2|\leq \displaystyle \frac{A^2 \log s}{s^2}, \;\; \|\displaystyle \frac{g_-}{1+|y|^3} \|_{L^\infty}\leq  \displaystyle \frac{A}{s^{\frac{5}{2}-\eps_1}},
\end{eqnarray}
\begin{eqnarray}\label{shri_2}
 \|\displaystyle g_e \|_{L^\infty}\leq  \displaystyle \frac{A^2}{s^{1-\eps_1}}
, \;\;   \|\displaystyle (1+|y|^\beta) g_e \|_{L^\infty}\leq  \displaystyle \frac{A^2}{s^{1-\frac{\beta}{2}-\eps_1}},
\end{eqnarray}
where $g_k$, $g_-$ and $g_e$ are defined in \eqref{decomp_v_b} and \eqref{decomp}.
\end{defi}
\begin{rem}\label{remshrin}
\begin{enumerate}
\item  Since $p>3$ and $\beta$ satisfy \eqref{beta}, it follows  that $\beta <\frac{2}{p-1}<1$. Therefore, since $\epsilon_1\in(0, 1- \frac \beta 2)$, it follows that the bounds on $g_-$ and $g_e$ go to zero as $s\to \infty$. 
\item When $\epsilon_1=\frac 12$, we recover the shrinking set of our previous paper \cite{AZ1}. Since we will take $\epsilon_1$ arbitrarily small here, we clearly improve our estimates. This is the key novelty of our work.

\end{enumerate}
\end{rem}
Since  this shrinking set is different from all the previous studies, we need to introduce new estimates.
Let us first note that
the set  $\mathcal{V}_{A, \eps_1}(s)$ is increasing (for fixed $s,\; \beta,\; K_0, \;  \eps_1$) with respect to $A$ in the sense of inclusion.
Let us now give some properties of the shrinking set in the following:
\begin{pro}\label{prop_shrin}
For all $K_0\ge 1$, $A\geq 1$ and $0<\eps_1\le \frac{1}{2}$, there exists $s_1(K_0, A, \eps_1)$ such that, for all $s\geq s_1(K_0, A, \eps_1)$ and $g\in \mathcal{V}_{A, \eps_1}(s)$, we have
\begin{itemize}
\item[i)] \begin{equation}\label{est_shrin_1}
\|g\|_{L^\infty(|y|\leq 2K_0\sqrt{s})}\leq \displaystyle \frac{CA}{s^{1-\eps_1}}\quad \mbox{and}\quad \|g\|_{L^\infty(\R)} \leq \displaystyle \frac{CA^2}{s^{1-\eps_1}}.
\end{equation}
\item[ii)] \begin{equation}\label{est_shrin}
\|(1+|y|^\beta)g\|_{L^\infty(|y|\leq 2K_0\sqrt{s})}\leq \displaystyle \frac{CA}{s^{1-\frac{\beta}{2}-\eps_1}}\;\; \mbox{and}\;\; \|(1+|y|^\beta)g\|_{L^\infty(\R)} \leq \displaystyle \frac{CA^2}{s^{1-\frac{\beta}{2}-\eps_1}}.
\end{equation}
\end{itemize}
\end{pro}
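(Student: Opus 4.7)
The plan is to exploit the decomposition $g = g_b + g_e$ from \eqref{decomp_v_b} together with the spectral expansion $g_b = g_0 + g_1 y + g_2(y^2-2) + g_-$ from \eqref{decomp}, and to bound each of the pieces pointwise using \eqref{shri_1}--\eqref{shri_2}. The natural split of $\R$ is into the inner region $\{|y|\le K_0\sqrt{s}\}$, on which $\chi\equiv 1$ so that $g_e\equiv 0$ and $g\equiv g_b$, and the complement, on which $g_e$ is active while $g_b$ remains supported in $\{|y|\le 2K_0\sqrt{s}\}$ by construction of the cut-off $\chi$ in \eqref{chi}.

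For item i), on the support $\{|y|\le 2K_0\sqrt{s}\}$ of $g_b$ the shrinking-set bounds give
\[
|g_0|\le \frac{A}{s^2},\qquad |g_1 y|\le \frac{2K_0 A}{s^{3/2}},\qquad |g_2(y^2-2)|\le \frac{CK_0^2 A^2\log s}{s},
\]
\[
|g_-(y)|\le (1+|y|^3)\frac{A}{s^{5/2-\eps_1}}\le \frac{CK_0^3 A}{s^{1-\eps_1}},
\]
so that, after absorbing the $A^2\log s/s$ contribution into the $A/s^{1-\eps_1}$ one for $s$ large, I obtain $|g_b|\le CA/s^{1-\eps_1}$. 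The inner bound follows at once since there $g = g_b$, and the global bound then follows by adding $\|g_e\|_{L^\infty}\le A^2/s^{1-\eps_1}$ from \eqref{shri_2}, which is the dominant contribution outside $\{|y|\le K_0\sqrt s\}$.

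For item ii) the same four pointwise estimates are multiplied by the weight $(1+|y|^\beta)$; on the support of $g_b$ this weight is bounded by $Cs^{\beta/2}$, which promotes every $1/s^\alpha$ into $1/s^{\alpha-\beta/2}$ and yields $\|(1+|y|^\beta)g_b\|_{L^\infty}\le CA/s^{1-\beta/2-\eps_1}$. For the outer piece I read off $\|(1+|y|^\beta)g_e\|_{L^\infty}\le A^2/s^{1-\beta/2-\eps_1}$ directly from \eqref{shri_2}, which gives the announced global weighted bound.

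The main, and essentially only, delicate point is the absorption of the $g_2 h_2$ contribution of size $A^2\log s/s$ into the $A/s^{1-\eps_1}$ term produced by $g_-$: this requires the inequality $A\log s\le Cs^{\eps_1}$, which forces $s_1$ to depend on the triple $(K_0,A,\eps_1)$ exactly as announced in the statement. Everything beyond that is routine bookkeeping of the shrinking-set estimates, with all other pre-factors depending only on $K_0$ (hence only on the generic constant $C$ allowed by the proposition).
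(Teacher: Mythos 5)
Your strategy---decompose $g=g_b+g_e$, expand $g_b$ on the Hermite modes, bound each piece on the support of $\chi$, and absorb the $A^2\log s/s$ contribution of $g_2h_2$ at the price of taking $s_1=s_1(K_0,A,\eps_1)$ large enough that $A\log s\le s^{\eps_1}$---is exactly the standard argument that the paper itself delegates to Proposition 3.8 of \cite{AZ1}, and the global bounds (the second inequalities in \eqref{est_shrin_1} and \eqref{est_shrin}, with $A^2$) are correctly established by your computation.

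There is, however, a genuine mismatch in your treatment of the first inequalities, which are stated on $\{|y|\le 2K_0\sqrt s\}$ with a single power of $A$. You justify them by writing that ``the inner bound follows at once since there $g=g_b$'', but $g=g_b$ only on $\{|y|\le K_0\sqrt s\}$, where $\chi\equiv 1$; on the annulus $K_0\sqrt s\le |y|\le 2K_0\sqrt s$ one has $g=g_b+g_e$ with $g_e$ generally nonzero, and the only control available on $g_e$ there is $\|g_e\|_{L^\infty}\le A^2/s^{1-\eps_1}$ (resp.\ $\|(1+|y|^\beta)g_e\|_{L^\infty}\le A^2/s^{1-\frac\beta 2-\eps_1}$). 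Your own estimates therefore yield $CA^2/s^{1-\eps_1}$ on that annulus, not $CA/s^{1-\eps_1}$, and this is not just a presentational slip: a function concentrated near $|y|=2K_0\sqrt s$ (where $\chi$ is arbitrarily small) of amplitude of order $A^2/s^{1-\eps_1}$, suitably normalized so that the weighted bound on $g_e$ holds, lies in $\mathcal{V}_{A,\eps_1}(s)$ because its low modes and its $g_-$ component are then harmless, yet it violates the single-$A$ bound on $\{|y|\le 2K_0\sqrt s\}$ once $A$ is large. The clean fix is either to state the $CA$ bounds on $\{|y|\le K_0\sqrt s\}$ (equivalently, for $g_b$ on its support, which is what your computation actually proves and what is needed for the Taylor expansions of $B(v)$ and $Vv$), or to accept $CA^2$ on all of $\{|y|\le 2K_0\sqrt s\}$; note that only the global $CA^2$ estimates are invoked later, e.g.\ in \eqref{double}, so nothing downstream is affected. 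Apart from this point, your bookkeeping, including the source of the dependence of $s_1$ on $A$ and $\eps_1$, is correct.
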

\begin{rem}
Here and throughout the paper, the generic constant $C$ depends on $p$ and $N$, and may depend on $K_0$ too.
\end{rem}
We omit the proof of this proposition, since it is  obtained by a simple modification of the proof of    Proposition $3.8$ in \cite{AZ1} page $14$.

\medskip

 The construction of a solution $v$ of \eqref{eq_v} is based on a careful choice of initial data  at time $s_0=- \log T$, as we do in the following:
\begin{defi}(Choice of the initial data) For all $K_0\ge 1$,  $A\geq 1$, $s_0=-\log T>1$ and $d_0, \, d_1\in \R$, we consider the following function as initial data for equation $(\ref{eq_v})$:
\begin{equation}\label{ci}
\psi_{s_0, d_0, d_1}(y)=\displaystyle \frac{A}{s_0^2}(d_0h_0(y)+d_1h_1(y))\chi(2y, s_0),
\end{equation}
where $h_i$, $i=0,1$ are defined in $(\ref{vect_prop})$ and $\chi$ is defined in $(\ref{chi})$.
\end{defi}
Reasonably, we choose  the parameter $(d_0, d_1)$ such that  the initial data $\psi_{s_0, d_0, d_1}\in \mathcal{V}_{A, \eps_1}(s_0)$. More precisely, we claim the following result:
\begin{pro}\label{prop_ci}
(Properties of initial data) For each $0<\eps_1\le \frac{1}{2}$,  $K_0\ge 1$, $A\geq 1$, there exists $s_{01}(K_0, A, \eps_1)>1$ such that for all $s_0\geq s_{01}(K_0, A, \eps_1)$:
\begin{itemize}
\item[i)] There exists a rectangle $\mathcal{D}_{s_0}\subset[-2, 2]^2$ such that the mapping
\begin{equation}
\begin{array}{lcl}

\Phi : \R^2&\to &\R^2\\
(d_0, d_1)&\mapsto&(\psi_0, \psi_1),
\end{array}
\end{equation}
(where $\psi:=\psi_{s_0, d_0, d_1}$) is linear, one to one from $\mathcal{D}_{s_0}$ onto $[\displaystyle -\frac{A}{s_0^2},\displaystyle \frac{A}{s_0^2}]^2$ and maps  $\partial\mathcal{D}_{s_0}$ into $\partial([\displaystyle -\frac{A}{s_0^2},\displaystyle \frac{A}{s_0^2}]^2)$. Moreover, it has degree one on the boundary.\\
\item[ii)] For all $(d_0, d_1)\in \mathcal{D}_{s_0} $, $\psi \in \mathcal{V}_{A, \eps_1}(s_0)$ with strict inequalities except for $(\psi_0, \psi_1)$, in the sense that
\begin{equation}\label{psi_initial}
\psi_e \equiv 0,\quad  |\psi_-(y)|< \frac{1}{s_0^{\frac{5}{2}-\eps_1}}(1+|y|^3),\;  \forall y\in \R,\\
|\psi_k|\leq \frac{A}{s_0^2},\;  k=0,1, \quad |\psi_2|< \displaystyle \frac{ \log s_0}{s_0^2}.
\end{equation}
\item[iii)] Moreover,  for all $(d_0, d_1)\in  \mathcal{D}_{s_0}$, we have
\begin{eqnarray}
\|(1+|y|^\beta)\nabla \psi\|_{L^\infty}&\leq& \frac{CA}{s_0^{2-\frac{\beta}{2}}}\leq \frac{1}{s_0^{1-\frac{\beta}{2}-\eps_1}},\label{n_ci}\\
|\nabla \psi_-(y)| &\leq& \frac{1}{s_0^{\frac{5}{2}-\eps_1}}(1+|y|^3).\label{ci_-}
\end{eqnarray}
\end{itemize}
\end{pro}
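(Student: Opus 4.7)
The strategy is to observe that $\psi_{s_0, d_0, d_1}$ is, up to a compactly supported cutoff at scale $K_0\sqrt{s_0}/2$, a linear combination of the first two Hermite polynomials, so the projections onto higher modes and the part outside the effective blow-up region both come only from cutoff truncation errors, which are either exponentially small (on the spectral side, thanks to Gaussian decay of $\rho$) or controllable by the support condition.

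For item (i), I would first compute
\[
\psi_k = \frac{A}{s_0^2}\sum_{j=0}^{1} d_j \int_{\R} h_j(y) k_k(y) \chi(2y, s_0)\,\rho(y)\,dy,
\]
and, using that $1-\chi(2y,s_0)$ vanishes on $\{|y|\le K_0\sqrt{s_0}/4\}$ together with Gaussian decay, write
\[
\psi_k = \frac{A}{s_0^2} d_k + O\bigl(e^{-cs_0}\bigr) \quad (k=0,1), \qquad \psi_2 = O\bigl(e^{-cs_0}\bigr),
\]
by Hermite orthogonality. Hence $\Phi(d_0,d_1) = \frac{A}{s_0^2}(d_0,d_1) + O(e^{-cs_0})$ is a linear map which is a small perturbation of a positive diagonal isomorphism. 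Defining $\mathcal{D}_{s_0} := \Phi^{-1}([-A/s_0^2, A/s_0^2]^2)$ gives a rectangle close to $[-1,1]^2 \subset [-2,2]^2$ for $s_0$ large enough, with $\Phi$ bijective onto the target square, sending boundary to boundary with degree one (since it is an invertible linear map with positive determinant).

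For item (ii), the identity $\psi_e \equiv 0$ follows purely from supports: $\mathrm{supp}\,\psi \subset \{|y|\le K_0\sqrt{s_0}\}$ while $\chi(y,s_0)\equiv 1$ on the same ball, so $\psi(1-\chi)=0$. The bound $|\psi_k|\le A/s_0^2$ for $k=0,1$ is exactly the definition of $\mathcal{D}_{s_0}$, with equality only on the boundary. The estimate $|\psi_2| < \log s_0/s_0^2$ follows from the exponentially small bound computed above. For $\psi_-$, I would split into two zones: on $\{|y|\le K_0\sqrt{s_0}/4\}$, $\chi(2y,s_0)=1$, so $\psi = \frac{A}{s_0^2}(d_0 + d_1 y)$ and subtracting the projections gives $|\psi_-(y)|=O(e^{-cs_0})(1+|y|)$, which is easily absorbed. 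On $\{|y|\ge K_0\sqrt{s_0}/4\}$, use the triangle inequality $|\psi_-| \le |\psi| + \sum_{k=0}^2 |\psi_k|\,|h_k(y)| \le CA(1+|y|^2)/s_0^2$, and compare to $(1+|y|^3)/s_0^{5/2-\eps_1}$: since $|y|\gtrsim \sqrt{s_0}$ in this zone, the ratio is $\lesssim A s_0^{1/2}/s_0^{1/2+\eps_1} \to 0$ as $s_0\to\infty$.

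For item (iii), I would differentiate $\psi$ directly. The derivative falls either on the linear factor $d_0 h_0 + d_1 h_1$, giving contribution of order $A/s_0^2$, or on $\chi(2y,s_0)$, which produces a factor $1/\sqrt{s_0}$ from the chain rule times $|d_0 + d_1 y| \lesssim \sqrt{s_0}$ on the support, so overall $|\nabla\psi| \le CA/s_0^2$. Since $\mathrm{supp}\,\nabla\psi\subset\{|y|\le K_0\sqrt{s_0}\}$, we get $(1+|y|^\beta)|\nabla\psi|\le CA s_0^{\beta/2}/s_0^2 = CA/s_0^{2-\beta/2}$, which is $\le 1/s_0^{1-\beta/2-\eps_1}$ for $s_0 \ge s_{01}(K_0,A,\eps_1)$ because the exponent gap is $1+\eps_1 > 0$. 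The estimate on $\nabla \psi_-$ is handled by the same two-zone split used for $\psi_-$, now using $h_0'=0$, $h_1'=1$, $h_2'=2y$.

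The main technical point — though not really an obstacle — is the outer-zone bound on $\psi_-$: the fact that one gains a power of $s_0^{\beta/2}$ or $s_0$ from the support radius $K_0\sqrt{s_0}$ is what makes the $\eps_1$-dependent shrinking set closed under this initialization. The scheme is otherwise a direct adaptation of the corresponding initialization proposition in \cite{AZ1}, with $\frac12$ replaced by $\eps_1$ throughout; no new ingredient is needed beyond checking that the resulting exponent inequalities still give room for $s_0$ large.
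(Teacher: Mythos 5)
Your proposal is correct and follows essentially the same route as the paper, which itself omits the proof and delegates it to Proposition 4.5 of \cite{TZ} and pages 14--15 of \cite{AZ1}: linearity of $(d_0,d_1)\mapsto(\psi_0,\psi_1)$ as an exponentially small perturbation of $\frac{A}{s_0^2}\mathrm{Id}$ via Hermite orthogonality and Gaussian decay, $\psi_e\equiv 0$ by support considerations, and a two-zone comparison for $\psi_-$ and $\nabla\psi_-$ where the outer zone gains the factor $s_0^{-\eps_1}$ from $|y|\gtrsim\sqrt{s_0}$. The only cosmetic imprecision is that $\chi(2y,s_0)\equiv 1$ already on $\{|y|\le K_0\sqrt{s_0}/2\}$ rather than $\{|y|\le K_0\sqrt{s_0}/4\}$, which does not affect the argument.
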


We omit the proof, since it is obtained by a simple modification of the proof of Proposition $4.5$ of \cite{TZ}.
For more details,  we refer the interested reader  to pages $5915-5918$ of \cite{TZ}, and pages $14-15$ of  \cite{AZ1}.

\subsection{Parabolic Regularity}
 As stated above, our goal is to get   the convergence in $W^{1, \infty}_\beta$. Thus,  we need to prove that the solution $v$ of \eqref{eq_v} satisfies 
 $$\displaystyle \lim_{s\to +\infty}\|(1+|y|^\beta) \nabla v(s)\|_{L^\infty}=0.$$
 More precisely, we need the following parabolic regularity  result  for equation $(\ref{eq_v})$:
\begin{pro}\label{prop-reg}(Parabolic regularity in $\mathcal{V}_{A, \eps_1}(s)$)\\
For all $0<\eps_1\le \frac{1}{2}$,  $K_0\ge 1$ and  $A\geq 1$, there exists  $s_{02}(K_0, A, \eps_1)$ such that for all $s_0\geq s_{02}(K_0,A)$, if $v$ is  the solution of $(\ref{eq_v})$  for all $s\in [s_0, s_1]$, $s_0\leq s_1$, with initial data at $s_0$, given in $(\ref{ci})$ with  $(d_0, d_1)\in \mathcal{D}_{s_0}$ defined in Proposition \ref{prop_ci}, and  $v(s)\in \mathcal{V}_{A, \eps_1}(s)$, then,  for all $s\in[s_0, s_1]$, we have
\begin{eqnarray}\label{reg}
\|\displaystyle \nabla v(s) \|_{L^\infty}\leq  \displaystyle \frac{CA^2}{s^{1-\eps_1}} \quad \mbox{and} \quad \|\displaystyle (1+|y|^\beta)\nabla v(s) \|_{L^\infty}\leq  \displaystyle \frac{CA^2}{s^{1-\frac{\beta}{2}-\eps_1}}.
\end{eqnarray}
\end{pro}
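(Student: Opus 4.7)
The plan is to exploit the smoothing effect of the kernel $K(s,t)$ of $\mathcal{L}+V$ through the integral formulation \eqref{eq_int_v}. The key fact is that on a short time interval $\tau=s-\sigma\in(0,1]$, $K$ behaves like the Ornstein--Uhlenbeck semigroup of $\mathcal{L}$, which is explicit and satisfies
\begin{equation*}
\|\nabla e^{\tau\mathcal{L}}g\|_{L^\infty}\le \frac{C}{\sqrt{1-e^{-\tau}}}\|g\|_{L^\infty},\qquad \|(1+|y|^\beta)\nabla e^{\tau\mathcal{L}}g\|_{L^\infty}\le \frac{C}{\sqrt{1-e^{-\tau}}}\|(1+|y|^\beta)g\|_{L^\infty},
\end{equation*}
the weight passing through the Gaussian kernel thanks to $|y|^\beta\lesssim |e^{\tau/2}x|^\beta+|y-e^{-\tau/2}x|^\beta$. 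Since $V$ is uniformly bounded, $K$ inherits analogous bounds up to multiplicative constants.

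I would fix $\tau^*\in(0,1]$ independent of $s_0$ and split into two regimes. On $[s_0,s_0+\tau^*]$, \eqref{reg} follows from continuity of the flow on a compact time interval, starting from the regular initial data controlled by \eqref{n_ci}--\eqref{ci_-}. For $s>s_0+\tau^*$, apply $\nabla$ to \eqref{eq_int_v} with $\sigma=s-\tau^*$:
\begin{equation*}
\nabla v(s)=\nabla K(s,\sigma)v(\sigma)+\int_\sigma^s\nabla K(s,t)\bigl(B(v(t))+R(t)+\mathcal{N}(t)\bigr)\,dt,
\end{equation*}
and estimate each term using $v(t)\in\mathcal{V}_{A,\eps_1}(t)$ on $[\sigma,s]$. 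The linear piece yields $\|\nabla K(s,\sigma)v(\sigma)\|_{L^\infty}\le C(\tau^*)^{-1/2}\|v(\sigma)\|_{L^\infty}\le CA^2 s^{-(1-\eps_1)}$ via Proposition \ref{prop_shrin}, and similarly for the weighted version. In the Duhamel integral, $B(v)$ is quadratic in $v$ and yields a strictly smaller rate; $\mathcal{N}$ carries the damping $e^{-\gamma s}$ with $\gamma>0$ and is negligible; the remainder $R$ is treated as in \cite{AZ1}, the choice \eqref{choice} being tailored so that its worst contribution fits the target rate $s^{-(1-\beta/2-\eps_1)}$. The $(s-t)^{-1/2}$ singularity of $\nabla K$ is integrable on $[\sigma,s]$.

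The main obstacle is the weighted estimate. In $\mathcal{V}_{A,\eps_1}$ only the exterior bound \eqref{shri_2} realizes the sharp rate $s^{-(1-\beta/2-\eps_1)}$; matching it after the kernel action requires splitting $\{|y|\le 2K_0\sqrt{s}\}$ versus $\{|y|>K_0\sqrt{s}\}$, combining the interior and global weighted bounds from Proposition \ref{prop_shrin}, and checking that the support of $R$ on $\{|y|\le 2g_\eps(s)\}$ together with the other forcing terms remains compatible with the weighted Gaussian kernel. As the authors emphasize, the proof is a direct modification of the one in \cite{AZ1}; the only change is to track the parameter $\eps_1$ carefully, revisiting exponents rather than introducing new ideas.
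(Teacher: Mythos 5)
Your proposal is correct and follows essentially the same route as the proof the paper points to (subsection 3.2.3 of \cite{AZ1}): a short-time splitting of the Duhamel formula \eqref{eq_int_v}, the $(s-\sigma)^{-1/2}$ gradient-smoothing bound for $K$ (unweighted and weighted), and term-by-term estimates of $B$, $R$, $\mathcal{N}$ using the shrinking-set bounds, with \eqref{choice} ensuring the remainder fits the target rate. The only point to tighten is the initial layer $[s_0,s_0+\tau^*]$, where ``continuity of the flow'' should be replaced by the same Duhamel estimate fed with the quantitative gradient bounds \eqref{n_ci}--\eqref{ci_-} on the initial data, exactly as in \cite{AZ1}.
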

The proof of the previous proposition  follows exactly as in \cite{AZ1}. For that reason, the proof is omitted and we refer the reader to  subsection 3.2.3 page 21-24 in \cite{AZ1} for details.\\

Since $v(s)\in \mathcal{V}_{A, \eps_1}(s)$, we clearly see from $(\ref{est_shrin})$ and  $(\ref{reg})$ that
\begin{equation}\label{double}
\|\displaystyle (1+|y|^\beta) v(s) \|_{L^\infty}+\|\displaystyle (1+|y|^\beta)\nabla v(s) \|_{L^\infty}\leq  \displaystyle \frac{CA^2}{s^{1-\frac{\beta}{2}-\eps_1}}.
\end{equation}

\subsection{Reduction to a finite dimensional problem}\label{sub_red}
The following section is crucial in the proof of the existence of the blow-up solution. In fact, we reduce here the 
construction question
to a finite dimensional problem. As in \cite{MZ}, \cite{EZ},  \cite{TZ} and \cite{AZ1}, we prove that it is enough to control the $2$ components $(v_0(s), v_1(s))\in [\displaystyle -\frac{A}{s^2},\displaystyle \frac{A}{s^2}]^2$ in order to control the solution $v(s)$ in $\mathcal{V}_{A, \eps_1}(s) $,  which is infinite dimensional. Precisely, we have the following proposition.
\begin{pro}\label{prop_red}(Control of $v(s)$ by $(v_0, v_1)(s)$ in $\mathcal{V}_{A, \eps_1}(s)$ ) For all $0<\eps_1\le \frac{1}{2}$, there exists $K_3(\eps_1)\ge 1$ such that for any $K_0\ge K_3(\eps_1)$, there exists $A_3(K_0, \eps_1) \geq 1$ such that for each
$A \geq A_3(K_0, \eps_1)$, there exists $s_{03}(K_0, A, \eps_1)\in\R$ such that for all $s_0\geq s_{03}(K_0, A, \eps_1)$, the following holds:\\
If $v$ is a solution of $(\ref{eq_v})$ with initial data at $s = s_0$ given by $(\ref{ci})$ with $(d_0, d_1) \in  \mathcal{D}_{s_0}$, and
$v(s)\in \mathcal{V}_{A, \eps_1}(s)$ for all $s \in  [s_0, s_1]$, with $v(s_1) \in  \partial \mathcal{V}_{A, \eps_1}(s_1)$ for some $s_1 \geq s_0$, then:
\begin{itemize}
\item[i)](Reduction to a finite dimensional problem) We have:
$$(v_0(s_1), v_1(s_1))\in \partial([\displaystyle -\frac{A}{s_1^2},\displaystyle \frac{A}{s_1^2}]^2).$$
\item[ii)](Transverse crossing) There exist $m \in \{0, 1\}$ and $\omega\in \{-1, 1\}$ such that
$$\omega v_m(s_1) =\displaystyle \frac{A}{s_1^2}\quad \mbox{and}\quad
\omega v'_m(s_1) > 0.$$
\end{itemize}
\end{pro}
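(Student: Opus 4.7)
The plan is to follow the classical reduction scheme pioneered by Bricmont--Kupiainen and refined by Merle--Zaag, which the authors already used in \cite{AZ1}. The argument is by contradiction on a component-by-component basis: I would show that if $v(s_1)\in\partial\mathcal{V}_{A,\eps_1}(s_1)$, then the saturation \emph{cannot} occur on $v_2$, $v_-$, or $v_e$, so it must occur on $(v_0,v_1)$. The main technical tool is the integral formulation \eqref{eq_int_v} combined with the spectral decomposition \eqref{decomp} and the parabolic regularity estimate \eqref{reg}, which gives us uniform control on $(1+|y|^\beta)\nabla v$ and allows us to handle the nonlocal term $\mathcal{N}$ (since $\gamma>0$, the factor $e^{-\gamma s}$ makes it strongly perturbative).

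For each of $v_2$, $v_-$, $v_e$, I would derive a differential inequality on $[s_0,s_1]$. Projecting \eqref{eq_v} onto the eigenspaces of $\mathcal{L}$: on the null eigenspace ($m=2$), the potential $V$ contributes a leading order term that in dimension one gives the classical ODE $v_2'=-\tfrac{2}{s}v_2+O(s^{-2-\eta})$ producing the $A^2\log s/s^2$ bound with a strict inequality (the power $\log s/s^2$ is what the linear dynamics on the null direction produce); on the negative subspace, the semigroup $e^{(\mathcal{L}+V)\tau}$ contracts at rate $e^{-\tau/2}$ modulo the potential, giving $\|v_-/(1+|y|^3)\|_\infty\le \tfrac{A/2}{s^{5/2-\eps_1}}$ strictly; and for the exterior part, since $\mathcal{L}+V$ behaves like $\mathcal{L}-p/(p-1)$ on $\{|y|\ge K_0\sqrt{s}\}$ which has purely negative spectrum (here $K_0$ must be chosen large, giving the threshold $K_3(\eps_1)$), we contract strongly. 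In each case, one inspects the bounds on $B(v)$, $R$, and $\mathcal{N}$: thanks to \eqref{double}, $B(v)$ is quadratic and thus $O(A^4/s^{2-\beta-2\eps_1})$ which fits the budget once $\eps_1<1-\beta/2$; the remainder $R$ is controlled exactly as in \cite{AZ1} using the specific choice $g_\eps(s)=s^{1/2+\eps}$ with $\eps<\eps_1\beta^{-1}$; and $\mathcal{N}$ is absorbed by $e^{-\gamma s}$. Taking $A$ large and then $s_0$ large ensures the generic constants are swallowed, so the inequalities on $v_2$, $v_-$, $v_e$ remain strict at $s_1$. This proves part (i).

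For part (ii), once we know that $(v_0(s_1),v_1(s_1))\in\partial([-A/s_1^2,A/s_1^2]^2)$, say $\omega v_m(s_1)=A/s_1^2$ for some $m\in\{0,1\}$ and $\omega\in\{-1,1\}$, I would project \eqref{eq_v} onto $h_m$ to get
\begin{equation*}
v_m'(s)=\Big(1-\frac{m}{2}\Big)v_m(s)+\tilde{V}_m(s)+P_m(B(v)+R+\mathcal{N}),
\end{equation*}
where $\tilde{V}_m$ comes from the projection of $Vv$ onto $h_m$ and is $o(v_m)$ on the blow-up area (or more precisely $O(s^{-1}\|v_b\|)$). Since $1-m/2\ge 1/2$ dominates $d/ds(A/s^2)=-2A/s^3$, and all other terms are of strictly smaller order than $A/s^2$ (using the shrinking set bounds and \eqref{reg} to deal with the gradient and nonlocal contributions), one obtains $\omega v_m'(s_1)\ge \tfrac{1}{2}\cdot \tfrac{A}{s_1^2}\cdot(1+o(1))>0$, which is the transverse crossing.

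The main obstacle, relative to \cite{AZ1}, is that by allowing $\eps_1$ arbitrarily small, the room between the upper bounds defining $\mathcal{V}_{A,\eps_1}(s)$ and the natural size of the error terms $B(v)+R+\mathcal{N}$ shrinks. In particular, the new $v_-$ bound $A\,s^{-5/2+\eps_1}$ and the gradient-weighted $v_e$ bound require reworking the estimates on $R$ that in the old paper were obtained only for $\eps< (p-1)/4$; here one must verify that the choice \eqref{choice} with $\eps<\eps_1\beta^{-1}$ is genuinely sufficient to make $(1+|y|^\beta)|R|\lesssim s^{-1+\beta/2+\eps_1}$ and, coupled with the quadratic nonlinear estimate and the exponentially small contribution of $\mathcal{N}$, still leaves strict margins. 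This is the delicate point where the $\eps_1$-improvement is bought, and it controls the whole scheme.
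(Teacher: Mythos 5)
Your outline follows the same route as the paper (and as \cite{AZ1}): estimate $B(v)$, $R$, $Vv$ and $\mathcal{N}$ in the shrinking set, feed them into the Duhamel formulation \eqref{eq_int_v} together with Bricmont--Kupiainen-type kernel estimates to show that the bounds on $v_-$ and $v_e$ (both the plain and the $(1+|y|^\beta)$-weighted one) are strict at $s_1$ once $A$ and then $s_0$ are large, derive ODEs for $v_0,v_1,v_2$, conclude (i) by elimination, and get (ii) from $|v_m'-(1-\frac m2)v_m|\le C/s^2$ with $A$ large enough to absorb $C$. You also correctly identify the role of the choice $\eps<\eps_1\beta^{-1}$ in closing the weighted estimate on $R_e$, which is the genuinely new point relative to \cite{AZ1}.

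The one step that fails as written is the null-mode ODE. You claim that $v_2'=-\frac2s v_2+O(s^{-2-\eta})$ ``produces the $A^2\log s/s^2$ bound''; it does not. Integrating $(s^2v_2)'=s^2\cdot O(s^{-2-\eta})=O(s^{-\eta})$ gives $v_2=O(s^{-1-\eta})$, which is far outside $A^2\log s/s^2$ for any $\eta<1$. What is actually needed (and what Proposition \ref{pro_red} ii) asserts) is $|v_2'+\frac2s v_2|\le C/s^3$, so that $(s^2v_2)'=O(s^{-1})$ and the logarithm appears from integrating a source of size exactly $s^{-3}$ --- not from ``the linear dynamics on the null direction'', as you put it. Note also that this $O(s^{-3})$ bound cannot be read off from the mere membership $R,\,B(v)\in\mathcal{V}_{C,\eps_1}(s)$ and $Vv\in\mathcal{V}_{CA,\eps_1}(s)$, which only control $R_2$, etc., at the order $\log s/s^2$: one must compute the projections onto $h_2$ precisely enough that the $O(s^{-2})$ contributions of $P_2(Vv)$, $P_2(B(v))$ and $R_2$ reorganize into $-\frac2s v_2+O(s^{-3})$. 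This refined projection computation (done in \cite{NZ0} and invoked by the paper) is missing from your plan, and without it neither the $v_2$ estimate in part (i) nor, more importantly, the closure of the $A^2\log s/s^2$ bound can be obtained.
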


The remainder of this section is devoted to the  proof of  Proposition \ref{prop_red}. We proceed in $2$ steps:
\begin{itemize}
\item In the first step, we prove that if  $v(s)\in \mathcal{V}_{A, \eps_1}(s)$, then  $B(v)$, $R(y,s)$ and $\mathcal{N}(y,s)$ given in $(\ref{eq_v})$ belong
$\mathcal{V}_{C, \eps_1}(s)$ and the potential term $Vv(s)\in  \mathcal{V}_{CA, \eps_1}(s)$, for some positive constant $C$.
\item In the second  step, we prove the result of the reduction to a finite dimensional problem.
\end{itemize}
\subsubsection{Step 1: Preliminary estimates on various terms of equation $(\ref{eq_v})$}
In the following, we prove  that   the 
remainder
term $R(y, s)$ is  
in $\mathcal{V}_{ C, \eps_1}(s)$ for $s$ large enough and some $C>0$. We  also prove that  if $v(s)\in  \mathcal{V}_{A, \eps_1}(s)$, then  the nonlinear term $B(v)\in \mathcal{V}_{ C, \eps_1}(s)$ and the potential term $Vv$ is trapped  in $\mathcal{V}_{ CA, \eps_1}(s)$. Furthermore, under some additional assumptions on $v$, we prove that the new term $\mathcal{N}(y,s)$ is also  trapped in $\mathcal{V}_{ C, \eps_1}(s)$.
More precisely, we prove the following result:
\begin{lem}\label{lem_est}
\begin{enumerate}
\item For all $K_0\ge 1$ and $0<\eps_1\le \frac{1}{2}$, there exists $s_3(K_0, \eps_1)$ sufficiently large such that for all $s\geq s_3(K_0, \eps_1)$,  the 
remainder
term
 $R(s)\in \mathcal{V}_{ C, \eps_1}(s)$.
\item For all $K_0\ge 1$, $0<\eps_1\le \frac{1}{2}$ and $A\geq 1$, there exists $s_4(K_0, A, \eps_1)$ sufficiently large such that for all $s\geq s_4(K_0, A, \eps_1)$, if $v(s)\in  \mathcal{V}_{A, \eps_1}(s)$, then  the nonlinear term $B(v)\in \mathcal{V}_{C, \eps_1}(s)$ and the potential term $Vv \in\mathcal{V}_{CA, \eps_1}(s)$.
\end{enumerate}
\end{lem}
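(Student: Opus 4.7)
The plan is to follow the template of the corresponding lemma in \cite{AZ1}, handling each of $R$, $Vv$, and $B(v)$ in turn; and within each, checking the five components of the target shrinking set $\mathcal{V}_{C,\eps_1}(s)$ (resp.\ $\mathcal{V}_{CA,\eps_1}(s)$ for $Vv$): the three low modes $k=0,1,2$, the negative mode, and the outer part with and without the weight $1+|y|^\beta$. In each case, one applies the cut-off $\chi(y,s)$ from \eqref{chi} to split into inner and outer regions, uses the Gaussian weight $\rho$ to make the outer contributions to the low-mode projections exponentially small, and tracks how the parameter $\eps$ appearing in $g_\eps(s)=s^{1/2+\eps}$, chosen per \eqref{choice} so that $\eps\beta<\eps_1$, propagates through every bound.

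For the residue $R$, one splits $\varphi$ as in \eqref{n_profile}. The contribution from $f(y/\sqrt s)$ produces a residue of size $C/s$ whose leading piece lies along $h_0$ (coming from the Laplacian acting on $f(y/\sqrt s)$ through a factor $N/(2s)$ in the inner region), and this is cancelled by the correction $(\kappa N/(2ps))\chi_0(y/g_\eps(s))$. The refined weighted-outer bound $s^{-(1-\beta/2-\eps_1)}$ stems from the observation that $\chi_0(y/g_\eps(s))$ is supported in $\{|y|\le 2s^{1/2+\eps}\}$, so $|y|^\beta$ contributes a factor at most $s^{(1/2+\eps)\beta}$; combined with the $1/s$ coming from $\partial_s\chi_0$ (or, similarly, from differentiating the $1/s$ prefactor), this yields $s^{-(1-\beta/2-\eps\beta)}$, and the condition $\eps\beta<\eps_1$ delivers the required exponent. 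This is precisely where we improve over \cite{AZ1}: the old constraint $\eps<(p-1)/4$ is replaced by the weaker \eqref{choice}, which allows $\eps_1$ arbitrarily close to $0$.

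For the potential term $Vv$, one uses that $V(y,s)\sim -\frac{p-1}{2ps}|y|^2$ in the inner region and $V\to -p/(p-1)$ in the outer region; combined with the modal decomposition of $v\in\mathcal{V}_{A,\eps_1}(s)$, this lets one express each projection $(Vv)_k$ in terms of the $v_j$'s with $j$ close to $k$, and the bounds follow directly from $v\in\mathcal{V}_{A,\eps_1}(s)$. For the nonlinear term $B(v)$, the hypothesis $p>3$ and the uniform bound on $\varphi$ give the pointwise estimate $|B(v)|\le C|v|^2$, and Proposition \ref{prop_shrin} yields $|v|\le CA/s^{1-\eps_1}$ on $\{|y|\le 2K_0\sqrt s\}$ and $|v|\le CA^2/s^{1-\eps_1}$ globally; this translates into $|B(v)|\le CA^4/s^{2(1-\eps_1)}$, absorbed by $\mathcal{V}_{C,\eps_1}(s)$ since $\eps_1\le 1/2$. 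For the low-mode projections of $B(v)$, the finer modal structure of $v$ (dominated by $v_2 h_2$ plus the $v_-$ and $v_e$ contributions with their improved decay) recovers the sharper $C/s^2$ bound.

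The main obstacle is the negative-mode estimate $\|R_-/(1+|y|^3)\|_{L^\infty}\le C/s^{5/2-\eps_1}$: it requires Taylor-expanding $f(y/\sqrt s)$ to sufficient order to extract cleanly the contributions to $h_0,h_1,h_2$, and then estimating the remainder uniformly in $y$ using the gauge $1+|y|^3$. Analogous care is required for $(B(v))_-$. Both parallel the computations in \cite{AZ1}, but they must be rerun with the new choice of $\eps$ from \eqref{choice} to produce the refined exponent $\eps_1$; once these delicate projections are handled, the remaining components assemble routinely by combining the $\mathcal{V}_{A,\eps_1}$-bounds on $v$ with Proposition \ref{prop_shrin}.
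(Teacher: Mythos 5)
Your plan matches the paper's proof in all essentials: the authors likewise defer the $B(v)$ and $Vv$ estimates to \cite{AZ1} (noting the low-mode constraints are unchanged), and concentrate on $R_-$ (via a Taylor expansion in $z$ and $Z$ combined with the technical projection lemma) and on the weighted bound for $R_e$, where the support $\{|y|\le 2s^{1/2+\eps}\}$ of $\chi_0(y/g_\eps(s))$ produces the loss $s^{\eps\beta}$ that is absorbed precisely by the choice $\eps\beta\le\eps_1$ in \eqref{choice}. This is exactly the mechanism you identify, so the proposal is correct and follows the same route.
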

We need the following technical lemma before proving Lemma  \ref{lem_est}.

\begin{lem} \label{lem_tech}
For all $K_0\ge 1$, there  exists $s_2(K_0)$ such that for all $s\geq s_2(K_0)$,  the following holds:\\
(i) if $g(y)=1$ then $\left\|\displaystyle \frac{g_-(y)}{1+|y|^3}\right\|_{L^\infty}\leq \frac{C}{K_0^3s^{\frac{3}{2}}}$,\\
(ii) if $g(y)=y^2$, then $\left\|\displaystyle \frac{g_-(y)}{1+|y|^3}\right\|_{L^\infty}\leq \frac{C}{K_0\sqrt s}$.
\end{lem}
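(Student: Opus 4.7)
The key observation is that both test functions sit in a finite-dimensional Hermite subspace: $1=h_0$ and $y^2=h_2+2h_0$. Once the effect of the cut-off $\chi$ is peeled off at the price of a super-exponentially small error, the function $g_-$ reduces essentially to $g(\chi-1)$, and the required polynomial decay follows merely from the support of $\chi-1$ being contained in $\{|y|\ge K_0\sqrt s\}$.

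Concretely, I would first compute the projections $g_m=P_m(g\chi)$ for $m=0,1,2$. Writing $a_m$ for the exact Hermite coefficient of $g$ in the basis $\{h_m\}$,
\begin{equation*}
g_m \;=\; \frac{1}{2^m m!}\int_\R g(y)\chi(y,s)h_m(y)\rho(y)\,dy \;=\; a_m \;-\; \frac{1}{2^m m!}\int_\R g(y)(1-\chi(y,s))h_m(y)\rho(y)\,dy.
\end{equation*}
The error integrand is supported in $\{|y|\ge K_0\sqrt s\}$, where the Gaussian weight beats any polynomial factor, giving $|g_m-a_m|\le Ce^{-K_0^2 s/8}$ for $s\ge s_2(K_0)$. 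Since $g$ is a polynomial of degree at most two, the exact identity $\sum_{m=0}^{2}a_m h_m=g$ holds, so
\begin{equation*}
g_- \;=\; g\chi \;-\; \sum_{m=0}^{2}g_m h_m \;=\; g(\chi-1) \;+\; \sum_{m=0}^{2}(a_m-g_m)h_m.
\end{equation*}

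The plan is then to bound the two pieces separately. The correction $\sum_{m=0}^{2}(a_m-g_m)h_m$ is a polynomial of degree $\le 2$ with super-exponentially small coefficients, so dividing by $1+|y|^3$ gives an $L^\infty$ bound of $O(e^{-cK_0^2 s})$, which is negligible against either target. The main term $g(\chi-1)$ vanishes on $\{|y|\le K_0\sqrt s\}$ and is dominated pointwise by $|g(y)|$ on the complement, hence
\begin{equation*}
\left\|\frac{g(\chi-1)}{1+|y|^3}\right\|_{L^\infty} \;\le\; \sup_{|y|\ge K_0\sqrt s}\frac{|g(y)|}{|y|^3}.
\end{equation*}
For $g(y)=1$ the right-hand side is bounded by $(K_0\sqrt s)^{-3}=K_0^{-3}s^{-3/2}$, which is exactly (i); for $g(y)=y^2$ it equals $\sup_{|y|\ge K_0\sqrt s}|y|^{-1}=(K_0\sqrt s)^{-1}$, which is exactly (ii).

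No serious obstacle is expected. The only technical input is the routine tail estimate $\int_{|y|\ge K_0\sqrt s}|y|^j\rho(y)\,dy\le Ce^{-K_0^2 s/8}$ for fixed $j\ge 0$ and $s$ large, and the easy verification that $s_2(K_0)$ can be chosen large enough so that the exponentially small corrections are absorbed into the polynomial main terms. The conceptual point is that $g$ belongs to the span of $\{h_0,h_2\}$, which forces the projection residual to come only from the tail of $\rho$ cut off by $\chi$.
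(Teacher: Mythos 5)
Your proof is correct, and it is essentially the argument the paper delegates to Lemma 4.8 of \cite{TZ}: since $1=h_0$ and $y^2=h_2+2h_0$ lie in $\mathrm{span}\{h_0,h_1,h_2\}$, the residual $g_-$ reduces to $g(\chi-1)$, supported in $\{|y|\ge K_0\sqrt s\}$, plus Gaussian-tail corrections of order $e^{-cK_0^2 s}$ that are absorbed for $s\ge s_2(K_0)$. The resulting bounds $K_0^{-3}s^{-3/2}$ and $(K_0\sqrt s)^{-1}$ match the statement exactly.
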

\begin{proof}
The proof of this technical lemma follows exactly as in \cite{TZ}. For that  reason, the proof  is omitted,  and we refer the  interested  reader to Lemma $4.8$ page $5916-5917$ in \cite{TZ} for a similar case.
\end{proof}
Now, we are ready to prove Lemma \ref{lem_est}.


\begin{proof}[Proof of Lemma \ref{lem_est}]
In comparison with \cite{AZ1}, the estimates for $B(v)$ and $vV$ can be adapted smoothly (see subsection 3.2.2 page $ 14-17$ in \cite{AZ1} and subsection 4.2.2  page $5918-5923$ in  \cite{TZ}), unlike for $R(y,s)$. For that reason, we only prove the estimate for the latter.
Moreover, since the constraints on $R_m(s)$ for $m=0,1,2$ in our new shrinking set are the same as with the old shrinking set in \cite{AZ1} (see Remark \ref{remshrin}), we refer the reader to Lemma 3.9 page 15 in that paper for the proof of the estimates on those components, and focus only on the proofs of the estimates related to $R_-(s)$ and $R_e(s)$.\\
Since $-\frac{1}{2}z f'(z)-\frac{1}{p-1}f(z)+f(z)^p=0$ by definition \eqref{profile} of $f(z)$, we write $R(y, s)$ introduced in \eqref{rest_term} as follows
\begin{eqnarray}\label{R}
R(y,s)&=&\displaystyle \frac{1}{s} f''(z)+\frac{1}{2s}z f'(z)+ \big(f(z)+\chi_0(Z)\frac{\kappa}{2ps}\big)^p-f(z)^p\nonumber \\
&+& \frac{\kappa}{2ps}\Big[ \frac{1}{g_\epsilon^2}\chi_0''(Z)-\big(\frac{1}{2}-\frac{g_\epsilon'(s)}{g_\epsilon(s)}\big)Z\chi_0'(Z)+(\frac{1}{s}-\frac{1}{p-1})\chi_0(Z)\Big]\nonumber\\
&=& R_i+R_{ii},
\end{eqnarray}
where $\displaystyle z=\frac{y}{\sqrt{s}}, \;Z=\frac{y}{g_\epsilon(s)}$, $g_\epsilon(s)=s^{\frac{1}{2}+\eps}$, and $\eps\in (0, \min(1, \textcolor{black}{\eps_1\beta^{-1}}))$ was already fixed in \eqref{choice}.\\
First, noting that
$f(z)$ 
is
bounded by definition \eqref{profile}, then
recalling that $p>3$ (see \eqref{hyp}) and $0\le \chi_0(Z)\le 1$ (see right after \eqref{n_profile}), we write
\begin{equation}\label{taylor1}
|\big(f(z)+\chi_0(Z)\frac{\kappa}{2ps}\big)^p-f(z)^p|\le \frac Cs.
\end{equation}
Noting that $f''(z)$ and $zf'(z)$ are also bounded, still by \eqref{profile}, then recalling that $\chi_0$ is smooth and compactly supported, we clearly see from \eqref{R} that
\begin{equation}\label{RLinf}
|R(y, s)|\leq \frac{C}{s}.
\end{equation}
In particular, 
by definition \eqref{decomp_v_b}, we have for $s$ large enough:
$$|R_e(y, s)|\leq \frac{C}{s}\leq \frac C{s^{1-\eps_1}}.$$
  
 Now, we estimate $\|(1+|y|^{\beta})R_e(y, s)\|_{L^\infty}$. Since we have the 
 same
 profile as in \cite{AZ1},   arguing as  in pages 15-17 of \cite{AZ1}, we 
 easily
 prove
 the following estimate  
%
$$\|(1+|y|^{\beta})R_e(y,s)\|_{L^\infty}\leq \frac{C}{s^{1-\frac{\beta}{2}-\eps \beta}}.$$
Since $\eps \beta\leq \eps_1$ by \eqref{choice},
we obtain  for $s$ large enough
$$\|(1+|y|^{\beta})R_e(y,s)\|_{L^\infty}\leq \frac{C}{s^{1-\frac{\beta}{2}-\eps_1}}.$$
Finally, we 
bound
$R_-(y,s)$.\\
Refining the Taylor expansion \eqref{taylor1} up to the second order, we write
\begin{eqnarray}\label{Ri}
R_i= \frac 1s( f"(z)+\frac{z}{2}f'(z)+\frac{\kappa}{2p}\chi_0(Z)f(z)^{p-1})
+\frac{p(p-1)}{2}(\frac{\kappa}{2ps})^2 \chi_0^2(Z)f^{p-2}(z)+O\left(\frac{1}{s^3}\right).
\end{eqnarray}
Since $\chi_0$ is constant on the unit ball $B(0,1)$ (see right after \eqref{n_profile}), making a Taylor expansion of $R_i$ \eqref{Ri}
in $z$ and in $Z$, we see that
\[
R_i = \frac 1s \left(a+bz^2+O(z^3)\right) +\frac c{s^2} (1+O(z^2))+O(\frac 1{s^3}) +O\left(\frac{|Z|^3}s\right)
\]
for some real numbers $a$, $b$ and $c$.
Similarly, we have from \eqref{R}
\[
R_{ii} = \frac \kappa{2ps}\left(\frac 1s - \frac 1{p-1}\right)+O\left(\frac{|Z|^3}s\right).
\]
Using Lemma \ref{lem_tech} together with straightforward estimates, we see that
\begin{equation*}
|R_-|\leq  \frac{C}{s^{\frac{5}{2}}}(1+|y|^3)\leq  \frac{C}{s^{\frac{5}{2}-\eps_1}}(1+|y|^3)
\end{equation*}
for $s$ large enough.
This  concludes the proof of Lemma \ref{lem_est}.
\end{proof}
We now estimate the new term defined in \eqref{N}. For this end, we claim first, the  following Lemma:
\begin{lem}\label{lem_new_term} For all $0<\eps_1\le \frac{1}{2}$, $K_0\ge 1$ and  $A\geq 1$, there exists $s_5(K_0, A, \eps_1)$ sufficiently large, such that for all $s\geq s_5(K_0, A, \eps_1)$, if $v\in \mathcal{V}_{A, \eps_1}(s)$
satisfies the following 
\begin{equation}\label{reg_v}
\|\nabla v(s)\|_{L^\infty}\leq \frac{CA^2}{s^\tau}, \quad  \|(1+|y|^{\beta})\nabla v(s)\|_{L^\infty}\leq \frac{CA^2}{s^{\tau'}},
\end{equation} 
for some positive constants  $\tau$ and $\tau'$,
then, the new term defined in \eqref{N} satisfies
\begin{enumerate}
\item $\|\mathcal{N}(y,s)\|_{L^\infty}\leq Ce^{-\frac{\gamma}{2}s}$,
\item $\|(1+|y|^{\beta})\mathcal{N}_e(y,s)\|_{L^\infty}\leq Ce^{-\frac{\gamma}{2}s}$,
\end{enumerate}
where $C$ is a positive constant.
\end{lem}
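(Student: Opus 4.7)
The plan is to exploit the exponentially small prefactor $e^{-\gamma s}$ in the definition \eqref{N} of $\mathcal{N}$, where $\gamma>0$ by \eqref{hyp}. The argument reduces to controlling $|\nabla v+\nabla \varphi|$ pointwise and the integral $\int_{B(0,|y|)}|v+\varphi|^{q-1}$ uniformly in $|y|$ by a polynomial in $s$; any such polynomial is then crushed by $e^{-\gamma s/2}$ once $s$ is large enough.

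For the integral, I would start with the convexity inequality $|v+\varphi|^{q-1}\le C(|v|^{q-1}+|\varphi|^{q-1})$, valid since $q-1>1$ (this follows from \eqref{hyp}, which forces $q>N(p-1)/2+1>2$ when $p>3$). For the $\varphi$ contribution, the decay $|f(z)|\le C(1+|z|)^{-2/(p-1)}$ coming from \eqref{profile}, together with the rescaling $\zeta=z/\sqrt s$, yields
\[
\int_{B(0,|y|)}|\varphi|^{q-1}\,dz\le Cs^{N/2}\int_{\R^N}(1+|\zeta|)^{-2(q-1)/(p-1)}d\zeta,
\]
and the last integral converges because $2(q-1)/(p-1)>N$, precisely the upper bound in \eqref{hyp}. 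For the $v$ part, the shrinking-set bounds \eqref{shri_1}--\eqref{shri_2} give $|v(z,s)|\le CA^2\bigl(s^{1-\beta/2-\eps_1}(1+|z|^\beta)\bigr)^{-1}$; splitting $B(0,|y|)=\{|z|\le\sqrt s\}\cup\{\sqrt s\le|z|\le|y|\}$ and using $\beta(q-1)>N$ (guaranteed when $\mu\ne0$ by \eqref{beta}) gives an integral bounded by $CA^{2(q-1)}s^{N/2}$. In both cases the total is at most $CA^{2(q-1)}s^{N/2}$, uniformly in $|y|$.

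The pointwise bound on $|\nabla v+\nabla \varphi|$ is easier: \eqref{reg_v} gives $\|\nabla v\|_{L^\infty}\le CA^2/s^\tau$, and differentiating \eqref{n_profile} yields $\|\nabla \varphi\|_{L^\infty}\le C/\sqrt s$, so the sum is bounded by a constant times $A^2$ for $s$ large. Multiplying these estimates, I get $\|\mathcal{N}\|_{L^\infty}\le C|\mu|A^{2q}e^{-\gamma s}s^{N/2}\le Ce^{-\gamma s/2}$ provided $s\ge s_5(K_0,A,\eps_1)$ is chosen large enough so that $A^{2q}s^{N/2}\le e^{\gamma s/2}$. This settles part (i).

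For part (ii), the weight $(1+|y|^\beta)$ must be absorbed. Since $\mathcal{N}_e$ is supported in $\{|y|\ge K_0\sqrt s\}$, I would use the sharper weighted bound from \eqref{reg_v}, namely $\|(1+|y|^\beta)\nabla v\|_{L^\infty}\le CA^2/s^{\tau'}$, in place of the uniform one. For the $\varphi$ contribution, differentiating \eqref{n_profile} and using $|f'(z)|\le C(1+|z|)^{-(p+1)/(p-1)}$ shows that $(1+|y|^\beta)|\nabla \varphi(y,s)|$ remains bounded by a polynomial in $s$ on the whole space, because $(p+1)/(p-1)>\beta$ (a consequence of $\beta<2/(p-1)$). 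Multiplying this refined gradient bound by the integral bound $CA^{2(q-1)}s^{N/2}$ and the prefactor $e^{-\gamma s}$ yields a quantity again dominated by $Ce^{-\gamma s/2}$ for $s$ large. The one truly delicate point in the whole argument is the uniform (in $|y|$) control of the $v$-part of the integral, which is where the lower bound $\beta>N/(q-1)$ from \eqref{beta} is essential; everything else is routine bookkeeping.
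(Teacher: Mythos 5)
Your proof is correct, but it follows a genuinely different route from the paper's. The paper disposes of this lemma in one line: since the bounds defining $\mathcal{V}_{A,\eps_1}(s)$ are monotone increasing in $\eps_1$, the new shrinking set is contained in the old one with $\eps_1=\frac12$, so the corresponding lemma of \cite{AZ1} (Lemma 3.11 there) applies verbatim and nothing needs to be re-proved. You instead give a self-contained direct estimate: splitting $|v+\varphi|^{q-1}$, controlling the $\varphi$-part of the integral by rescaling (using the lower bound $q>\frac N2(p-1)+1$ of \eqref{hyp} for convergence --- note you call this the ``upper bound'' in \eqref{hyp}, a harmless mislabeling), controlling the $v$-part uniformly in $|y|$ via the weighted shrinking-set bound and $\beta>N/(q-1)$ from \eqref{beta} (with $\mathcal{N}\equiv 0$ when $\mu=0$, so that case is vacuous), bounding $|\nabla v+\nabla\varphi|$ (weighted, for part (ii)) polynomially, and letting $e^{-\gamma s}$ absorb all polynomial and $A$-dependent factors for $s\ge s_5(K_0,A,\eps_1)$. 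Your version is longer but has the merit of being self-contained and of making explicit exactly which structural hypotheses (\eqref{hyp}, \eqref{beta}, $\gamma>0$) each step consumes; the paper's version buys brevity at the cost of sending the reader to \cite{AZ1}. Both are valid.
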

\begin{proof}
Since $V_{A,\epsilon_1}$ is increasing (in the sense of inclusion) with respect to $\epsilon_1$, recalling that $0<\epsilon_1\le \frac 12$ and that the case $\epsilon_1=\frac 12$ was treated in Abdelhedi and Zaag \cite{AZ1}, we refer the read to Lemma 3.11 page 18 of \cite{AZ1}.\\
\end{proof}
Thanks to  Lemma \ref{lem_new_term}, we obtain the following results:

\begin{pro}\label{prop_new_term} For all $0<\eps_1\le \frac{1}{2}$, $K_0\ge 1$ and  $A\geq 1$, there exists $s_6(K_0, A, \eps_1)$ sufficiently large, such that for all $s\geq s_6(K_0, A, \eps_1 )$, if $v\in \mathcal{V}_{A, \eps_1}(s)$  and 
$\nabla v$ satisfies the estimate stated in Proposition \ref{prop-reg}, then, the new term $\mathcal{N}$ defined in \eqref{N} satisfies
$$|\mathcal{N}_m(s)|\le Ce^{-\frac \gamma2 s}, \,0\le m\le 2, \quad \displaystyle \|\frac{\mathcal{N}_{-}(y,s)}{1+|y|^3}\|_{L^\infty}\leq  Ce^{-\frac \gamma2 s} $$ and  $\mathcal{N}\in\mathcal{V}_{C, \eps_1}(s)$, for some positive constant $C$.
\end{pro}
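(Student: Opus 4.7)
\textbf{Plan of proof of Proposition \ref{prop_new_term}.} The strategy is to derive all the shrinking-set bounds for $\mathcal{N}$ directly from the two pointwise estimates in Lemma \ref{lem_new_term}, exploiting the fact that $e^{-\gamma s/2}$ dominates every inverse polynomial rate in $s$ for $s$ large. There is no need to re-open the non-local structure of $\mathcal{N}$: Lemma \ref{lem_new_term} has already packaged it.

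First, set $\mathcal{N}_b(y,s) = \chi(y,s)\mathcal{N}(y,s)$, which is supported in $\{|y|\le 2K_0\sqrt{s}\}$. Lemma \ref{lem_new_term}(1) yields $\|\mathcal{N}_b\|_{L^\infty}\le Ce^{-\gamma s/2}$. For the Hermite components $0\le m\le 2$, the formula
\[
\mathcal{N}_m(s) = \int_{\R} \mathcal{N}_b(y,s)\,k_m(y)\,\rho(y)\,dy
\]
together with the Gaussian integrability of the polynomial $k_m\rho$ gives immediately $|\mathcal{N}_m(s)|\le Ce^{-\gamma s/2}$. Next, writing
\[
\mathcal{N}_-(y,s) = \mathcal{N}_b(y,s) - \sum_{m=0}^{2}\mathcal{N}_m(s)\,h_m(y)
\]
and using $|h_m(y)|\le C(1+|y|^2)\le C(1+|y|^3)$ for $m\le 2$, we conclude
\[
\Big\|\frac{\mathcal{N}_-(y,s)}{1+|y|^3}\Big\|_{L^\infty} \le \frac{\|\mathcal{N}_b\|_{L^\infty}}{1} + C\sum_{m=0}^2 |\mathcal{N}_m(s)| \le Ce^{-\gamma s/2}.
\]

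For the exterior part $\mathcal{N}_e = (1-\chi)\mathcal{N}$, Lemma \ref{lem_new_term}(1) gives $\|\mathcal{N}_e\|_{L^\infty}\le Ce^{-\gamma s/2}$ and Lemma \ref{lem_new_term}(2) gives $\|(1+|y|^\beta)\mathcal{N}_e\|_{L^\infty}\le Ce^{-\gamma s/2}$. Comparing these with the thresholds in Definition \ref{def_shrinking} (the tightest of which is $A/s^2$), and recalling $\gamma>0$ by \eqref{hyp}, there exists $s_6(K_0,A,\eps_1)$ large enough so that $Ce^{-\gamma s/2}$ is smaller than every threshold for $s\ge s_6$; this places $\mathcal{N}$ in $\mathcal{V}_{C,\eps_1}(s)$ with an absolute constant $C$.

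\textbf{Anticipated obstacle.} There is essentially no obstacle: all the analytic work is absorbed in Lemma \ref{lem_new_term}, which already handles the non-local integral and the gradient contributions. The remaining step is purely the projection bookkeeping above, plus choosing $s_6$ large enough to defeat every polynomial threshold with the exponential $e^{-\gamma s/2}$. The only care required is to observe that $\nabla v$ in Proposition \ref{prop-reg} satisfies hypothesis \eqref{reg_v} of Lemma \ref{lem_new_term} with $\tau=1-\eps_1$ and $\tau'=1-\beta/2-\eps_1$, which is immediate from \eqref{reg}.
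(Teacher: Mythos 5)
Your proposal is correct and is exactly the ``standard estimates'' argument that the paper invokes by reference to Proposition 3.10 of \cite{AZ1}: project the $L^\infty$ bounds of Lemma \ref{lem_new_term} onto the Hermite modes, recover $\mathcal{N}_-$ and $\mathcal{N}_e$ by subtraction and cut-off, and let $e^{-\gamma s/2}$ beat every polynomial threshold of Definition \ref{def_shrinking} for $s\ge s_6$. Your observation that Proposition \ref{prop-reg} supplies hypothesis \eqref{reg_v} with $\tau=1-\eps_1$, $\tau'=1-\frac{\beta}{2}-\eps_1$ is the right link between the two statements.
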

\begin{proof}
The proof follows by standard estimates exactly as in the case $\epsilon_1=\frac 12$ treated in \cite{AZ1}. See Proposition 3.10 page 18 in that paper.
\end{proof}

\subsubsection{ Step 2: Reduction to a finite dimensional problem}
 In this subsection, we reduce the problem to a finite dimensional one. We prove through a priori estimates that the control of $v(s)$ in $\mathcal{V}_{A, \eps_1}(s)$ 
 can be
reduced to the control of $(v_0, v_1)(s)$ in $[-\frac{A}{s^2}, \frac{A}{s^2}]^2$.\\
 For this end, we 
 recall
 the integral equation \eqref{eq_int_v}, for each $s\geq \sigma \geq s_0$ :
\begin{eqnarray}\label{decomp1}
v(s)&=&K(s, \sigma) v(\sigma)+\int_\sigma^s K(s,t)(B(v(t))+R(t)+\mathcal{N}(t))dt\nonumber\\
&=& \mathcal{A}(s)+\mathcal{B}(s)+\mathcal{C}(s)+\mathcal{D}(s),
\end{eqnarray}
and we  estimate  the different components of this Duhamel formulation.\\
In the first step, 
we recall from \cite{AZ1} this fundamental property concerning the kernel $K(s, \sigma)$:
 \begin{lem}
   \label{lem_14}
  There exists $K_5\geq 1$ such that for all $K_0\ge K_5$, for all $\rho >0$, there exists $\sigma_0=\sigma_0(K_0,\rho)$ such that if $\sigma\geq \sigma_0\geq 1$ and $g(\sigma)$ satisfies
\begin{equation}\label{bk}
  \displaystyle \sum_{m=0}^2 |g_m(\sigma)|+\|\frac{g_{-}(y,
    \sigma)}{1+|y|^3}\|_{L^\infty}+\|(1+|y|^\beta)g_e(\sigma)\|_{L^\infty}<+\infty,
  \end{equation}
 then $\theta(s)=K(s, \sigma)g(\sigma)$ satisfies for all $s\in [\sigma,\sigma+\rho]$,
 \begin{enumerate}
 \item \begin{eqnarray*}\displaystyle \|\frac{\theta_-(y,s)}{1+|y|^3}\|_{L^\infty}&\leq &C\frac{e^{s-\sigma}((s-\sigma)^2+1)}{s}(|g_0(\sigma)|+|g_1(\sigma)|+\sqrt{s}|g_2(\sigma)|)\\&&+Ce^{-\frac{s-\sigma}{2}}\|\frac{g_-(y,\sigma)}{1+|y|^3}\|_{L^\infty} +C\frac{e^{-(s-\sigma)^2}}{s^{\frac{3}{2}}}\|g_e(\sigma)\|_{L^\infty}
 \end{eqnarray*}
 \item  $|\theta_e(s)|\leq \displaystyle Ce^{s-\sigma}\big(\sum_{l=0}^2 s^{\frac{l}{2}}|g_l(\sigma)|+s^{\frac{3}{2}}\|\frac{g_{-}(y, \sigma)}{1+|y|^3}\|_{L^\infty}\big)+Ce^{-\frac{s-\sigma}{p}}\|g_e(\sigma)\|_{L^\infty}.$
 \item \begin{eqnarray*}\displaystyle \|(1+|y|^{\beta})\theta_e(y,s)\|_{L^\infty}&\leq &\displaystyle Ce^{-\frac{s-\sigma}{2}(\frac{1}{p-1}-\frac{\beta}{2})}\|(1+|y|^{\beta})g_e(y,\sigma)\|_{L^\infty}\\
&&+ \displaystyle Ce^{\frac{\beta}{2}(s-\sigma)}s^{\frac{\beta}{2}}(\sum_{l=0}^2 s^{\frac{l}{2}}|g_l(\sigma)| +s^{\frac{3}{2}}\|\frac{g_-(y,\sigma)}{1+|y|^3}\|_{L^\infty}).
 \end{eqnarray*}
 \end{enumerate}
 \end{lem}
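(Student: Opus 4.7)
The plan is to derive the three estimates by treating $K(s,\sigma)$ perturbatively with respect to the explicit semigroup $e^{(s-\sigma)\mathcal{L}}$ through the Duhamel formula
\[ K(s,\sigma) g = e^{(s-\sigma)\mathcal{L}} g + \int_\sigma^s e^{(s-t)\mathcal{L}}\, V(t)\, K(t,\sigma) g\, dt, \]
and to exploit the dichotomy for $V$ recalled just before \eqref{chi}: on the blow-up region $\{|y|\le K_0\sqrt t\}$, $V(\cdot,t)$ is uniformly small, whereas on the exterior region it is uniformly close to $-p/(p-1)<-1$. By linearity of $K(s,\sigma)$, I would decompose the initial datum via \eqref{decomp} as $g(\sigma)=\sum_{m=0}^{2} g_m(\sigma)h_m+g_-(\sigma)+g_e(\sigma)$, and estimate the contribution of each piece to the three output norms independently.

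For item (1), the polynomial modes $g_m h_m$ are eigenfunctions of $e^{(s-\sigma)\mathcal{L}}$ and a priori have no component in $\mathcal{L}_-$; their contribution to $\theta_-$ is therefore generated entirely by the small potential $V$ through the Duhamel iteration, producing a prefactor $e^{s-\sigma}((s-\sigma)^2+1)/s$ whose $((s-\sigma)^2+1)/s$ reflects the two iterations needed for a positive mode to reach $\mathcal{L}_-$ combined with the smallness of $V$, multiplied by $|g_0|+|g_1|+\sqrt{s}\,|g_2|$ — the $\sqrt s$ for the $m=2$ term accounts for the relative $L^\infty$ scale of $h_2$ on the blow-up region. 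The $g_-$-contribution follows directly from the spectral gap of $\mathcal{L}$ on $\mathcal{L}_-$, yielding the decay $e^{-(s-\sigma)/2}$, while the exterior datum $g_e(\sigma)$ enters the interior estimate only via the Gaussian decay of the heat-type kernel across the cutoff at distance $K_0\sqrt s$, producing the factor $e^{-(s-\sigma)^2}/s^{3/2}$.

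For items (2) and (3), the crucial fact is that on $\{|y|\ge K_0\sqrt t\}$ the operator $\mathcal{L}+V$ effectively behaves like $\mathcal{L}-p/(p-1)$, whose top eigenvalue is $-1/(p-1)$; this yields the pure decay rate $e^{-(s-\sigma)/p}$ in item (2) (after absorbing a small $\eta$-loss). In item (3), the weight $(1+|y|^\beta)$ costs an extra $\beta/2$ per unit time because the drift $-\tfrac12 y\cdot\nabla$ rescales space by $e^{(s-\sigma)/2}$, producing the exponent $-\tfrac{s-\sigma}{2}(\tfrac{1}{p-1}-\tfrac{\beta}{2})$; the interior modes contribute to the exterior with a growth factor $e^{s-\sigma}s^{l/2}$, combining the positive eigenvalue $\lambda_0=1$ with the polynomial growth of $h_l$ on $|y|\sim\sqrt s$. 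The main obstacle will be the careful tracking of the $(1+|y|^\beta)$ weight in item (3), since $|y|^\beta$ lies outside the natural $L^2_\rho$-based calculus; I would handle this by pointwise analysis using a Mehler-type representation for $e^{(s-\sigma)\mathcal{L}}$ and by fixing the threshold $K_5$ large enough so that the $V$-contributions on the blow-up region can be absorbed through a bootstrap on the short interval $[\sigma,\sigma+\rho]$.
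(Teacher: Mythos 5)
You have identified the right route: the paper does not actually reprove this lemma, but refers to Bricmont and Kupiainen \cite{bricmont}, to its restatement in \cite{NZ2}, and to Lemma 3.15 of \cite{AZ1} for the weighted case $\beta>0$; all of those proofs proceed along the lines you sketch, namely a perturbative (Duhamel/Feynman--Kac) treatment of $K(s,\sigma)$ around $e^{(s-\sigma)\mathcal{L}}$, the decomposition \eqref{decomp} of the datum, and the inside/outside dichotomy of the potential $V$.

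Nevertheless, as written your text is a plan rather than a proof, and the gap is concrete: the one estimate on which all three items rest is never stated, let alone established, namely the pointwise Gaussian bound on the kernel of the full linearized flow, of the type $|K(s,\sigma;y,x)|\le C e^{s-\sigma}\, e^{(s-\sigma)\mathcal{L}}(y,x)$ for $0\le s-\sigma\le \rho$, used together with the explicit Mehler formula for $e^{\tau\mathcal{L}}(y,x)$. Without it, your treatment of the $g_-$ piece in item (1) does not close: the spectral gap yields the decay $e^{-(s-\sigma)/2}$ of $e^{(s-\sigma)\mathcal{L}}P_-$ only in $L^2_\rho$, whereas the conclusion is in the weighted sup-norm $\|\theta_-/(1+|y|^3)\|_{L^\infty}$, and converting the $L^2_\rho$ statement into the pointwise one is precisely the hard kernel computation of \cite{bricmont}. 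The same kernel bound is what actually produces the factor $e^{-(s-\sigma)^2}s^{-3/2}$ in front of $\|g_e(\sigma)\|_{L^\infty}$ and the factor $\sqrt{s}$ in front of $|g_2(\sigma)|$ (the latter coming from the interplay between the expansion $V\sim -\tfrac{C}{s}h_2+\dots$ on the blow-up region and the change of norms, not merely from the size of $h_2$); your verbal mechanisms do not by themselves yield these constants --- for instance, your ``the weight costs $\frac{\beta}{2}$ per unit time'' heuristic predicts a decay exponent $\frac{1}{p-1}-\frac{\beta}{2}$ per unit time in item (3), twice the stated $\frac{1}{2}\big(\frac{1}{p-1}-\frac{\beta}{2}\big)$. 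To turn the proposal into a proof you would need to establish that kernel bound and then redo the component-by-component computations with the extra weight $(1+|y|^{\beta})$, which is exactly the content of the proof in \cite{AZ1} that the paper invokes.
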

\begin{proof}
The original version of this lemma  is due to
     Bricmont and Kupiainen in \cite{bricmont}, where they gave the proof in the case where some particular bounds hold on each component appearing in \ref{bk}, only for $\beta=0$.
     Later in \cite{NZ2}, Nguyen and Zaag simply
   restated the lemma of Bricmont and Kupiainen and checked that the proof of \cite{bricmont}
   works without those particular bounds, still for $\beta=0$.
   In our earlier paper \cite{AZ1}, we handled the case $0\le \beta <\frac 2{p-1}$.
   Accordingly, we refer the
 reader to Lemma 3.15 page 26 in \cite{AZ1} for the detailed justification.
 \end{proof}

 \medskip
 
 Applying the above Lemma, we get  a new bound  on  all terms in the decomposition $(\ref{decomp1})$. More precisely, we have the following results:
\begin{lem}\label{lem15} For all $0<\eps_1\le \frac{1}{2}$, there exists $K_6(\eps_1)$ such that for any $K_0\ge K_6(\eps_1)$, there exists  $A_6(K_0, \eps_1)>0$ such that for all $A\geq A_6(K_0, \eps_1)$ and $\rho>0$, there exists $s_{06}(K_0, A, \rho, \eps_1)>0$ with the following property: for all $s_0\geq s_{06}(K_0, A, \rho, \eps_1)$ assume that for all $s\in [\sigma, \sigma+\rho]$, $v(s)$ satisfies $(\ref{eq_v})$, $v(s)\in \mathcal{V}_{A, \eps_1}(s)$ and $\nabla v$  satisfies the estimates stated in Proposition \ref{prop-reg}.
Then, we have:
\begin{enumerate}
\item Linear term:
$$\|\frac{\mathcal{A}-(y,s)}{1+|y|^3}\|_{L^\infty}\leq Ce^{-\frac{1}{2}(s-\sigma)}\|\frac{v_-(y,\sigma)}{1+|y|^3}\|_{L^\infty}+C\frac{e^{-(s-\sigma)^2}}{s^{\frac{3}{2}}}\|v_e(\sigma)\|_{L^\infty} +\frac{C }{s^{\frac{5}{2}-\eps_1}}. $$

$$\|\mathcal{A}_e\|_{L^\infty}\leq Ce^{-\frac{s-\sigma}{p}}\|v_e(\sigma)\|_{L^\infty}+ Ce^{s-\sigma}s^{\frac{3}{2}}\|\frac{v_-(y,\sigma)}{1+|y|^3}\|_{L^\infty}+\frac{C }{s^{1-\eps_1}}.$$
 \begin{eqnarray*}
\|(1+|y|^{\beta})\mathcal{A}_e(s)\|_{L^\infty}\!\!\!\!\!\!&\leq&\!\!\! Ce^{-\frac{s-\sigma}{2}(\frac{1}{p-1}-\frac{\beta}{2})}\| (1+|y|^{\beta})v_e(\sigma)\|_{L^\infty}\!+\! Ce^{\frac{\beta}{2}(s-\sigma)}s^{\frac{\beta}{2}+\frac{3}{2}}\|\frac{v_-(y,s)}{1+|y|^3}\|_{L^\infty}\\&+&\frac{C}{s^{1-\frac{\beta}{2}-\eps_1}}.
 \end{eqnarray*}
 \item Nonlinear 
 term:
 $$\|\frac{\mathcal{B}_-(y,s)}{1+|y|^3}\|_{L^\infty}\leq \frac{C}{s^{\frac{5}{2}-\eps_1}}(s-\sigma), \quad  \|\mathcal{B}_e(s)\|_{L^\infty}\leq \frac{C}{s^{1-\eps_1}}(s-\sigma)e^{s-\sigma}.$$
 $$\|(1+|y|^{\beta})\mathcal{B}_e(s)\|_{L^\infty}\leq C\frac{1+e^{\frac{\beta}{2}(s-\sigma)}(s-\sigma)}{s^{1-\frac{\beta}{2}-\eps_1}}.$$
 \item 
 Remainder
 term:
 $$\|\frac{\mathcal{C}_-(y,s)}{1+|y|^3}\|_{L^\infty}\leq \frac{C}{s^{\frac{5}{2}-\eps_1}}(s-\sigma), \quad  \|\mathcal{C}_e(s)\|_{L^\infty}\leq \frac{C}{s^{1-\eps_1}}(s-\sigma)e^{s-\sigma}.$$
 $$\|(1+|y|^{\beta})\mathcal{C}_e(s)\|_{L^\infty}\leq C\frac{1+e^{\frac{\beta}{2}(s-\sigma)}(s-\sigma)}{s^{1-\frac{\beta}{2}-\eps_1}}.$$
 \item New term:
 $$\|\frac{\mathcal{D}_-(y,s)}{1+|y|^3}\|_{L^\infty}\leq C(s-\sigma)e^{s-\sigma}e^{-\frac{\gamma}{2}s}, \quad  \|\mathcal{D}_e(s)\|_{L^\infty}\leq C(s-\sigma)e^{s-\sigma}e^{-\frac{\gamma}{2}s} .$$
 $$\|(1+|y|^{\beta})\mathcal{D}_e(s)\|_{L^\infty}\leq C(s-\sigma)e^{-\frac{\gamma}{2}s}(1+s^{\frac{\beta}{2}}e^{\frac{\beta}{2}(s-\sigma)}).$$
\end{enumerate}
\end{lem}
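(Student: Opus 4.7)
The plan is to establish each bound in the lemma by combining the action of the kernel $K(s,\sigma)$ described in Lemma \ref{lem_14} with the control of the constituent terms of \eqref{eq_v} obtained in Lemma \ref{lem_est} and Proposition \ref{prop_new_term}. Since the Duhamel formula \eqref{decomp1} splits $v(s)$ as $\mathcal{A}(s)+\mathcal{B}(s)+\mathcal{C}(s)+\mathcal{D}(s)$, I would treat each of the four pieces in turn, handling the linear term directly and the remaining three via an integral of the kernel against source terms whose components are already under control.

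For the linear part $\mathcal{A}(s)=K(s,\sigma)v(\sigma)$, I would apply Lemma \ref{lem_14} with $g(\sigma)=v(\sigma)$ and insert the shrinking-set bounds from Definition \ref{def_shrinking}: $|v_m(\sigma)|\le A/\sigma^2$ for $m=0,1$, $|v_2(\sigma)|\le A^2\log\sigma/\sigma^2$, $\|v_-(\cdot,\sigma)/(1+|y|^3)\|_{L^\infty}\le A/\sigma^{5/2-\epsilon_1}$, and the two bounds on $v_e(\sigma)$. In each of the three kernel estimates, the contribution of the finite-dimensional part $v_0,v_1,v_2$ produces a term that can be absorbed into $C/s^{5/2-\epsilon_1}$, $C/s^{1-\epsilon_1}$ and $C/s^{1-\beta/2-\epsilon_1}$ respectively (taking $s_0$ large), while the $v_-$ and $v_e$ contributions are kept as they appear in the statement. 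This yields item (1).

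Next I would handle $\mathcal{B}$, $\mathcal{C}$ and $\mathcal{D}$ uniformly, since all three have the form $\int_\sigma^s K(s,t) F(t)\,dt$. For $F=B(v)$ and $F=R$, Lemma \ref{lem_est} gives $F(t)\in\mathcal{V}_{C,\epsilon_1}(t)$, so the components $F_m$, $F_-$, $F_e$ satisfy the same shrinking-set bounds with $A$ replaced by $C$; applying Lemma \ref{lem_14} pointwise in $t$ with $g=F(t)$ and then integrating in $t\in[\sigma,s]$, the exponential factors $e^{s-t}$, $e^{-(s-t)/p}$ and $e^{\beta(s-t)/2}$ from the three assertions of Lemma \ref{lem_14} produce the $(s-\sigma)$ and $(s-\sigma)e^{s-\sigma}$ prefactors advertised in items (2) and (3). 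For $\mathcal{D}$, Proposition \ref{prop_new_term} gives bounds of order $e^{-\gamma s/2}$ on every component of $\mathcal{N}$ (with $\gamma>0$ by \eqref{hyp}), so the same integration against the kernel, combined with the fact that $e^{-\gamma t/2}$ dominates any polynomial factor in $t$, produces item (4).

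The main obstacle, which I expect to be bookkeeping rather than conceptual, is to verify that the new exponent $\epsilon_1$ propagates correctly through Lemma \ref{lem_14}: I must check that no hidden constant or exponent in the kernel estimates forces $\epsilon_1=\tfrac{1}{2}$. Since Lemma \ref{lem_14} itself does not involve $\epsilon_1$ at all and only the input bounds on $g$ do, the $\epsilon_1$-dependence propagates linearly through every estimate, and one recovers the bounds of \cite{AZ1} in the limit $\epsilon_1\to\tfrac 12$. This is precisely why the refined shrinking set of Definition \ref{def_shrinking} can replace the old one without modifying the kernel analysis.
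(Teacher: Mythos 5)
Your proposal is correct and follows the same route as the paper, which itself simply defers to the proof of Lemma 3.16 in \cite{AZ1}: apply the kernel estimates of Lemma \ref{lem_14} to $v(\sigma)$ for the linear term and, for the three integral terms, to the source components controlled by Lemma \ref{lem_est} and Proposition \ref{prop_new_term}, then integrate in $t$. Your observation that Lemma \ref{lem_14} is $\eps_1$-independent, so the new exponent propagates linearly through the kernel bounds (with the finite-dimensional contributions absorbed into the $C/s^{\frac52-\eps_1}$-type remainders for $s_0$ large), is exactly the point that makes the adaptation from \cite{AZ1} routine.
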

\begin{proof}
Although we have a different shrinking set here, in comparison with our earlier work \cite{AZ1}, the proof follows exactly in the same way. See Lemma 3.16 page 30 in that paper for details.
\end{proof}
In the following statement, we add the bounds of Lemma \ref{lem15} in order to obtain bounds on $v(y,s)$, thanks to the expression \eqref{decomp1}. We also project equation \eqref{eq_v} in order to write differential inequalities satisfies by $v_m$ for $m=0$, $1$ and $2$:
\begin{pro}\label{pro_red} For all $0<\eps_1\le \frac{1}{2}$, there exists $K_7(\eps_1)$ such that for any $K_0\ge K_7(\eps_1)$, there exists $A_7(K_0, \eps_1)\geq 1$ such that for all $A\geq A_7(K_0, \eps_1)$, there exists $s_{07}(K_0, A, \eps_1)$ large enough such that the following holds for all $s_0\geq s_{07}(K_0, A, \eps_1)$:\\
Assume that for some $s_1\geq \sigma\geq s_0$, we have
$$ v(s)\in \mathcal{V}_{A, \eps_1}(s), \quad \mbox{for all} \; s\in [\sigma, s_1], $$
and that $\nabla v$ satisfies the estimates stated in Proposition \ref{prop-reg}.
Then, the following holds for all $s\in [\sigma, s_1]$,
\begin{enumerate}
\item[i)] (ODE satisfied by the positive modes) :
For $m\in \{0,1\}$, we have
$$\displaystyle|v'_m(s)-(1-\frac{m}{2}) v_m(s)|\leq \frac{C}{s^2}.$$
\item[ii)] (ODE satisfied by the null mode) :
We have
$$|v'_2(s)+\frac{2}{s}v_2(s)|\leq\frac{C}{s^3}.$$
\item[iii)] (Control of the negative and outer modes): We have
\begin{eqnarray*}
\displaystyle \|\frac{v_{-}(s)}{1+|y|^3}\|_{L^\infty}\!\!\!&\leq&\!\!\! Ce^{-\frac{s-\sigma}{2}}\|\frac{v_{-}(\sigma)}{1+|y|^3}\|_{L^\infty}\!+\!C\frac{e^{-(s-\sigma)^2}}{s^{\frac{3}{2}}}\|v_e(\sigma)\|_{L^\infty}\!+ \!C\frac{1+s-\sigma}{s^{\frac{5}{2}-\eps_1}},\\
 \|v_{e}(s)\|_{L^\infty}\!\!\!\!\!\!&\leq& \!\!\!Ce^{-\frac{s-\sigma}{p}}\|v_e(\sigma)\|_{L^\infty}\!+ \!Ce^{s-\sigma}s^{\frac{3}{2}}\|\frac{v_{-}(\sigma)}{1+|y|^3}\|_{L^\infty}\!+ \!C\frac{1+(s-\sigma)e^{s-\sigma}}{s^{1-\eps_1}}.
\end{eqnarray*}
\item[iv)]
(Weighted estimate of the outer mode):
\begin{eqnarray*}
 \displaystyle \|(1+|y|^\beta)v_{e}(s)\|_{L^\infty}\!\!\!&\leq& \!\!\!C \displaystyle e^{-\frac{s-\sigma}{2}(\frac{1}{p-1}-\frac{\beta}{2})} \|(1+|y|^\beta)v_{e}(\sigma)\|_{L^\infty}\\
 &+& Ce^{\frac{\beta}{2}(s-\sigma)}s^{\frac{3}{2}+\frac{\beta}{2}}\|\frac{v_{-}(\sigma)}{1+|y|^3}\|_{L^\infty}+ C\frac{1+(s-\sigma)e^{\frac{\beta}{2}(s-\sigma)}}{s^{1-\frac{\beta}{2}-\eps_1}}.
\end{eqnarray*}
\end{enumerate}

\end{pro}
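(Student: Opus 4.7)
The four assertions divide naturally into two groups. Parts (iii) and (iv) are semi-norm bounds that follow directly from the Duhamel formulation (\ref{decomp1}) combined with Lemma \ref{lem15}, while parts (i) and (ii) are ODE inequalities obtained by projecting equation (\ref{eq_v}) itself against the eigenfunctions of $\mathcal{L}$.

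For (iii) and (iv), I would start from the identity $v = \mathcal{A}+\mathcal{B}+\mathcal{C}+\mathcal{D}$ and estimate each of the three semi-norms $\|\cdot/(1+|y|^3)\|_{L^\infty}$, $\|\cdot\|_{L^\infty}$ and $\|(1+|y|^\beta)\cdot\|_{L^\infty}$ by summing the four corresponding bounds from Lemma \ref{lem15}. The linear piece $\mathcal{A}$ retains the initial-data dependence on $v_-(\sigma)$, $v_e(\sigma)$ and $(1+|y|^\beta)v_e(\sigma)$, which is precisely the form appearing on the right-hand sides of (iii) and (iv). The nonlinear, remainder and new contributions are of the form $(s-\sigma)e^{\alpha(s-\sigma)}/s^{\cdots-\eps_1}$; in particular the new-term contribution $\mathcal{D}$, being dominated by a factor $e^{-\gamma s/2}$, is absorbed into the polynomial part once $s_0$ is chosen large enough.

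For (i) and (ii), I would project equation (\ref{eq_v}) against $k_m\rho$ for $m=0,1,2$. The linear part produces $P_m(\mathcal{L}v) = (1-\tfrac{m}{2})v_m$, while Lemma \ref{lem_est} and Proposition \ref{prop_new_term} immediately give $|P_m(B(v))|+|P_m(R)|+|P_m(\mathcal{N})| \le C/s^2$, which is already sharper, namely $C/s^3$, for $m=2$ thanks to the refined Taylor expansion of $R$ established in the proof of Lemma \ref{lem_est}. The only nontrivial term is $P_m(Vv)$. For $m=0,1$ a crude estimate using $\|V\chi\|_{L^2_\rho}=O(1/s)$ together with the bounds $v(s)\in\mathcal{V}_{A,\eps_1}(s)$ suffices to obtain $O(1/s^2)$. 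For $m=2$ one expands $V$ inside the blow-up zone up to order $1/s^2$; the leading term has the shape $c(y^2-2)/s=c\,h_2/s$, so $P_2(Vv)$ contains the main contribution $-\tfrac{2}{s}v_2$, yielding exactly the ODE of (ii), all other contributions being $O(1/s^3)$.

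The main obstacle is to verify that the relaxed bound $\|v_-/(1+|y|^3)\|_{L^\infty}\le A/s^{5/2-\eps_1}$ imposed by our new shrinking set does not spoil the $C/s^3$ remainder in the $h_2$-ODE. The worst contribution is $|P_2(Vv_-)|$, which after using the leading structure $V\sim h_2/s$ is of size $O(A/s^{7/2-\eps_1})$; since $\eps_1\le\tfrac{1}{2}$, this is at worst $O(A/s^3)$, and is absorbed into $C/s^3$ by choosing $s_0=s_{07}(K_0,A,\eps_1)$ large enough, which is permitted since the threshold $s_{07}$ is allowed to depend on $A$. The algebraic identities producing the coefficients $(1-\tfrac{m}{2})$ and $-\tfrac{2}{s}$ are identical to those in \cite{AZ1}, so I would invoke that computation and merely propagate the $\eps_1$-dependent powers coming from the new definition of $\mathcal{V}_{A,\eps_1}(s)$.
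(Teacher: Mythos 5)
Your proposal is correct and follows essentially the same route as the paper: items (iii) and (iv) are obtained by summing the bounds of Lemma \ref{lem15} over the Duhamel decomposition \eqref{decomp1}, and items (i) and (ii) by projecting equation \eqref{eq_v} onto $h_m$, with the potential term supplying the coefficients $(1-\tfrac m2)$ and $-\tfrac 2s$ exactly as in the standard case $\mu=0$ (the paper simply cites \cite{NZ0} for this computation, and handles the new term via its exponential smallness from Lemma \ref{lem_new_term}, as you do). Your explicit check that the relaxed bound $A/s^{5/2-\eps_1}$ on $v_-$ still yields an $O(1/s^3)$ remainder after absorbing the $A$-dependence into $s_{07}(K_0,A,\eps_1)$ is precisely the point that needs verifying with the new shrinking set, and your argument for it is sound.
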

\begin{proof}
The proof of items iii) and iv) is straightforward from Lemma \ref{lem15} and the decomposition \eqref{decomp1}.\\
As for items i) and ii), they follow exactly in the same way as in any paper about the standard heat equation ($\mu=0$), since the effect of the ``new term" is exponentially small, thanks to Lemma \ref{lem_new_term}. See for example Lemma 3.10 page 1293 of Nguyen and Zaag \cite{NZ0}.
 \end{proof}
 
 Now, with Proposition \ref{pro_red}, which gives estimates on the different components of the decomposition \eqref{decomp1}, 
 the proof of Proposition \ref{prop_red} follows exactly as in our earlier work \cite{AZ1}. For that reason, we omit the proof and refer the reader to page 31 of that paper, where the analogous statement (numbered Proposition 3.5) is proved.

\subsection{Proof of the existence of a solution  in $\mathcal{V}_{A, \eps_1}(s)$}
In this subsection, we solve the two-dimensional problem using index theory and we prove the existence of  a solution $v$ of \eqref{eq_v} in $\mathcal{V}_{A, \eps_1}(s)$. More precisely,
we have the following statement:
\begin{pro}[Existence of a solution in the shrinking set]\label{exis} For all $0<\eps_1\le \frac{1}{2}$, there exists  $K_4(\eps_1)\ge 1$ such that  for all $K_0\ge K_4(\eps_1)$, there exists $A_4(K_0, \eps_1)\geq 1$ such that for all $A\geq A_4(K_0, \eps_1)$ there exists $s_{04}(K_0,A, \eps_1)$ such that for all $s_0\geq s_{04}(K_0, A, \eps_1)$, there exists $(d_0, d_1)$ such that if $v$ is the solution of $(\ref{eq_v})$ with initial data at $s_0$, given in $(\ref{ci})$, then $v(s)\in \mathcal{V}_{A, \eps_1}(s)$, for all $s\geq s_0$.
\end{pro}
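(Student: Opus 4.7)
The plan is to execute a standard topological shooting argument, using Proposition \ref{prop_red} and Proposition \ref{prop_ci} as the two fundamental building blocks. I fix $K_0, A, s_0$ large enough so that the conclusions of Propositions \ref{prop_ci}, \ref{prop-reg}, \ref{prop_red}, and \ref{pro_red} all hold simultaneously. Argument by contradiction: suppose that for every $(d_0, d_1) \in \mathcal{D}_{s_0}$ the associated solution $v_{d_0, d_1}$ of \eqref{eq_v} with initial data $\psi_{s_0, d_0, d_1}$ eventually leaves $\mathcal{V}_{A, \epsilon_1}(s)$. Define the exit time
\[
s_*(d_0, d_1) = \sup\{\,s \ge s_0 \mid v(s') \in \mathcal{V}_{A,\epsilon_1}(s') \text{ for all } s' \in [s_0, s]\,\},
\]
so that $v(s_*) \in \partial \mathcal{V}_{A,\epsilon_1}(s_*)$.

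By Proposition \ref{prop_red} item i), at the exit time only the two positive modes touch the boundary, i.e.
\[
(v_0(s_*), v_1(s_*)) \in \partial\!\left(\left[-\tfrac{A}{s_*^2}, \tfrac{A}{s_*^2}\right]^2\right),
\]
while all other components remain strictly inside. This lets me define the shooting map
\[
\Psi : \mathcal{D}_{s_0} \longrightarrow \partial([-1,1]^2), \qquad \Psi(d_0, d_1) = \frac{s_*^2}{A}\bigl(v_0(s_*),\, v_1(s_*)\bigr).
\]
The transverse crossing property of Proposition \ref{prop_red} item ii) ensures that, at $s_*$, the trajectory $(v_0(s), v_1(s))$ crosses $\partial([-A/s^2, A/s^2]^2)$ transversally. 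Combined with continuous dependence of the solution of \eqref{eq_v} on the initial data $(d_0, d_1)$, this implies that $s_*$ is a continuous function of $(d_0, d_1)$, and hence $\Psi$ is continuous on $\mathcal{D}_{s_0}$.

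Next, I analyze $\Psi$ on $\partial \mathcal{D}_{s_0}$. By Proposition \ref{prop_ci} item ii), when $(d_0, d_1) \in \partial \mathcal{D}_{s_0}$, the initial data $\psi$ already satisfies $(\psi_0, \psi_1) \in \partial([-A/s_0^2, A/s_0^2]^2)$ while all other components in the definition of $\mathcal{V}_{A,\epsilon_1}(s_0)$ are strict. Together with the transverse crossing, this forces $s_*(d_0,d_1) = s_0$, so that $\Psi|_{\partial \mathcal{D}_{s_0}}$ coincides (after rescaling) with the restriction $\Phi|_{\partial \mathcal{D}_{s_0}}$ of the linear map of Proposition \ref{prop_ci} item i), which has degree one.

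This yields the desired contradiction: $\Psi$ is a continuous map from the topological disk $\mathcal{D}_{s_0}$ onto its boundary $\partial([-1,1]^2)$ whose restriction to $\partial \mathcal{D}_{s_0}$ has degree one, which is impossible by the no-retraction theorem. Consequently, there must exist at least one $(d_0, d_1) \in \mathcal{D}_{s_0}$ such that $v_{d_0, d_1}(s) \in \mathcal{V}_{A,\epsilon_1}(s)$ for every $s \ge s_0$, proving the proposition. The main technical obstacle throughout is the continuity of $s_*$: it relies crucially on the strict transverse crossing of item ii) of Proposition \ref{prop_red}, which in turn depends on the fact that the potentially dangerous remainder and nonlinear contributions (estimated in Lemma \ref{lem_est} and Proposition \ref{prop_new_term}) are genuinely subcritical compared with the leading $\frac{A}{s^2}$ scale controlling $v_0, v_1$.
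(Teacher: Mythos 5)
Your proposal is correct and follows exactly the classical topological shooting argument that the paper itself invokes (and omits, referring to Proposition 3.7 of \cite{AZ1}): contradiction via the exit-time map, continuity from the transverse crossing of Proposition \ref{prop_red} ii), reduction to the boundary behavior given by Proposition \ref{prop_ci}, and the degree/no-retraction contradiction. No gaps; this is the same route as the paper.
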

\begin{proof}
The derivation of this proposition from the reduction to a finite dimensional problem and the  transverse crossing (both given in Proposition \ref{prop_red}) is classical in the literature concerning construction of solutions with prescribed behavior. For that reason, we omit it and refer the reader to page 13 of \cite{AZ1} where an analogous statement is given (namely, Proposition 3.7). The proof in that paper holds here with natural modifications.
\end{proof}

 
  \section{Proof of the main result}
  This section  is dedicated to the proof of 
  Theorem \ref{th1}. 
  \begin{proof}[Proof of Theorem \ref{th1}]
  Consider some $\epsilon_1\in (0, \frac 12]$ and $\alpha \in (0, \frac 12)$. 
  Using Proposition \ref{exis}, 
  we derive
  the existence of a solution $v$ of equation $(\ref{eq_v})$ defined for all $y\in \R$ and $s\geq s_0$, 
  such that $v(s)\in \mathcal{V}_{A, \eps_1}(s)$ for some $s_0\geq 1$ and $A>0$. Moreover, using \eqref{double},
  we have for all $s\ge s_0$,
  $$\|(1+|y|^{\beta})v(s)\|_{L^\infty}+\|(1+|y|^{\beta})\nabla v(s)\|_{L^\infty}\leq \displaystyle \frac{C(A)}{s^{1-\frac{\beta}{2}-\eps_1}}$$
  where $C(A) = CA^2$. For simplicity, we omit the dependence on $A$ in the following.\\
By definition \eqref{v} of $v$, we see that for all $s\ge s_0$
 \begin{equation*}
\|( 1+|y|^{\beta})\big(w(y,s)-\varphi(y,s)\big)\|_{L^\infty}+\|(1+|y|^{\beta})\nabla_y \big(w(y,s)-\varphi(y,s)\big)\|_{L^\infty}\leq \displaystyle \frac{C}{s^{1-\frac{\beta}{2}-\eps_1}},
 \end{equation*}
where $\varphi$ is the profile introduced in $(\ref{n_profile})$.\\
First, we  estimate this profile. We remark that our  "intermediate" profile satisfies:
    $$|\displaystyle f(\frac{y}{\sqrt{s}})|\sim \big(\frac{s}{b|y|^2}\big)^{\frac{1}{p-1}}\; \mbox{as}\; \frac{y}{\sqrt{s}}\to +\infty,$$
and for all $K_0\ge 1$  and $|y|\geq 2K_0\sqrt{s}$, we have $$\displaystyle |\frac{1}{\sqrt{s}}\nabla f(\frac{y}{\sqrt{s}})|\leq \frac{C}{(1+|y|^\beta)s^{\frac{1}{2}-\frac{\beta}{2}}},$$
on the one hand (remember that $\beta < 2/(p-1)$).\\
 On the other hand, the cut-off  function $\chi_0$ satisfies for all $y\in \R$,
$$\displaystyle|\frac{\kappa N}{2ps}\chi_0(\frac{y}{g_\eps(s)})|\leq \frac{C}{(1+|y|^\beta)s^{1-\frac{\beta}{2}-\eps \beta}}, \quad |\frac{\kappa N}{2ps}\frac{1}{g_\eps(s)}\nabla \chi_0(\frac{y}{g_\eps(s)})|\leq \frac{C}{(1+|y|^\beta)s^{\frac{3}{2}-\frac{\beta}{2}-\eps(\beta-1)}}.$$
With the choice of  $\eps\in (0, \min(1, \eps_1\beta^{-1}))$ (see \eqref{choice}), we obtain
$$\displaystyle|\frac{\kappa N}{2ps}\chi_0(\frac{y}{g_\eps(s)})|\leq \frac{C}{(1+|y|^\beta)s^{1-\frac{\beta}{2}-\eps_1}}, \quad |\frac{\kappa N}{2ps}\frac{1}{g_\eps(s)}\nabla \chi_0(\frac{y}{g_\eps(s)})|\leq \frac{C}{(1+|y|^\beta)s^{1-\frac{\beta}{2}-\eps_1}}.$$
Using the above  estimates, we deduce that
\begin{equation*}
\|(1+|y|^{\beta})(w(y,s)-f(\frac y{\sqrt s}))\|_{L^\infty}+\|(1+|y|^{\beta})\nabla_y (w(y,s)-f(\frac y{\sqrt s}))\|_{L^\infty}\leq \displaystyle \frac{C}{s^{1-\frac{\beta}{2}-\eps_1}}.
\end{equation*}
Therefore, thanks to
the similarity variables transformation \eqref{var_sim}, the solution $u$ of equation $(\ref{eq_u})$ defined for all $x\in \R$ and $t\in [0, T)$ satisfies
\begin{equation*}
|(T-t)^{\frac{1}{p-1}}u(x,t)-f(\frac{x}{\sqrt{(T-t)|\log(T-t)|}})|\leq \displaystyle \frac{C}{(1+(\frac{|x|^2}{T-t})^\frac{\beta}{2})|\log(T-t)|^{1-\frac{\beta}{2}-\eps_1}},
\end{equation*}
and 
\begin{eqnarray*}
|(T-t)^{\frac{1}{2}+\frac{1}{p-1}}\nabla u(x,t)&-&\frac{1}{\sqrt{|\log(T-t)|}} \nabla f(\frac{x}{\sqrt{(T-t)|\log(T-t)|}})|\\
&&\leq \displaystyle \frac{C}{(1+(\frac{|x|^2}{T-t})^\frac{\beta}{2})|\log(T-t)|^{1-\frac{\beta}{2}-\eps_1}},
\end{eqnarray*}
and 
identities \eqref{N_u} and \eqref{N_nabla} follow.
In particular,
$\lim_{t\to T} (T-t)^{\frac{1}{p-1}}u(0, t)=(p-1)^\frac{1}{p-1}$ and $u$ blows up at time $t=T$ at the origin.\\

Since our results in item $1)$ of this Theorem are
  better that our earlier result in \cite{AZ1}, the single point
  blow-up property for $u$ and $\nabla u$ follows exactly as in our
  paper \cite{AZ2}. If for $u$, we clearly have from \eqref{N_u} that
  $u(0,t) \to \infty$ as $t\to T$, the question to find some $\xi(t)
  \to 0$ such that $|\nabla u(\xi(t),t)|\to 0$ was left open after our
  previous work. In this new work, we are able to prove it, as we have
  already stated in the third remark following the statement of
  Theorem \ref{th1}, and as we show in the following:\label{rk}
If we choose $\eps_1$ such that $\eps_1\le \frac{1}{2}-\frac{\beta}{2}$, then we have  $$(T-t)^{\frac{1}{p-1}}\sqrt{(T-t) |\log(T-t)|}\nabla u(\sqrt{(T-t)|\log(T-t)|}, t)\sim \nabla f(K_0) \; \mbox{ as}\; t\to T.$$
In particular,
$$\|\nabla u(t)\|_{L^\infty}\geq \frac{C}{(T-t)^{\frac{1}{p-1}}\sqrt{ (T-t) |\log(T-t)|}}. $$
 Thus  $\nabla u$ blows up at time $t=T$ at the origin.
This concludes the proof of  parts   $1)$ and $2)$ of Theorem \ref{th1}.\\

Finally,  for  the proof  of part $3)$ of   Theorem \ref{th1}, the reader can adapt easily the proof in   \cite{AZ2} (section $4$ pages 2618-2622),   where we replace the cut off function in \cite{AZ2} (step 1 page 2619), by $\phi_r(\xi)=\phi (\displaystyle \frac{\xi}{r|\log(T-t_0)|^\alpha})$, $0<\alpha< \frac{1}{2}$ and the bounded in estimate $(4.1)$ in \cite{AZ2} page 2618,  by $ \displaystyle \frac{c}{|\log(T-t_0)|^{1-\eps_1}}$, $0<\eps_1\le \frac{1}{2}$.
\end{proof}

\end{document}